\documentclass[]{amsart}

\usepackage[utf8]{inputenc}
\usepackage[OT2,T1]{fontenc}
\DeclareSymbolFont{cyrletters}{OT2}{wncyr}{m}{n}
\DeclareMathSymbol{\Sha}{\mathalpha}{cyrletters}{"58}
\usepackage{graphicx}
\graphicspath{ {./images/} }

\usepackage[hyphens,spaces,obeyspaces]{url}
\usepackage[colorlinks,allcolors=blue,hyperindex,breaklinks]{hyperref}
\hypersetup{
           breaklinks=true,   
           colorlinks=true,   
           pdfusetitle=true,  
        }

\usepackage{orcidlink}
\title[Level-raising of even representations of tetrahedral type]{Level-raising of even representations of tetrahedral type and equidistribution of lines in the projective plane}
\date{\today}
\usepackage[foot]{amsaddr}
\author{Peter Vang Uttenthal \orcidlink{0009-0001-0878-8213}}
\email{petervang@math.au.dk}
\address{Department of Mathematics, Aarhus University, Ny Munkegade 118, 1530-421, DK-8000
Aarhus C, Denmark}
\newcommand{\Gal}{\operatorname{Gal}}

\newcommand{\Q}{\mathbb{Q}}
\newcommand{\Z}{\mathbb{Z}}
\newcommand{\F}{\mathbb{F}}

\newcommand{\Ad}{\operatorname{Ad}^0(\overline{\rho})}
\newcommand{\Adl}{\operatorname{Ad}^0(\overline{\rho}^{(\ell)})}

\usepackage{amsthm,amsmath, mathrsfs, mathtools}
\usepackage{amsfonts}
\usepackage{amssymb}
\usepackage{fancyhdr}
\usepackage{IEEEtrantools}
\usepackage{tikz-cd}
\usepackage[english]{babel}
\usepackage[utf8]{inputenc}

\usepackage{csquotes}

\newtheorem{theorem}{Theorem}
\newtheorem{lemma}[theorem]{Lemma}
\newtheorem{definition}[theorem]{Definition}
\newtheorem{proposition}[theorem]{Proposition}
\newtheorem{corollary}[theorem]{Corollary}
\newtheorem{remark}[theorem]{Remark}
\newtheorem{conjecture}[theorem]{Conjecture}
\newtheorem{question}[theorem]{Question}

\usepackage[hyphens,spaces,obeyspaces]{url}
\usepackage[colorlinks,allcolors=blue,hyperindex,breaklinks]{hyperref}

\hypersetup{
           breaklinks=true,   
           colorlinks=true,   
           pdfusetitle=true,  
        }
\usepackage{tabularx}
\usepackage{subcaption}
\captionsetup{labelformat=empty}
\begin{document}

\maketitle

\begin{figure}[h]
    \centering
    \includegraphics[scale=0.5]{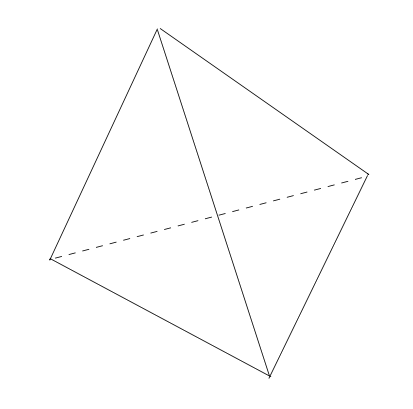}
    \caption{Tetrahedron with symmetry  group $A_4$ 
 (Langlands \cite[p. 16]{langlands})}
\end{figure}

\begin{abstract}
The distribution of primes raising the level of even Galois representations of tetrahedral type is studied.  Data are presented  on primes $v\leq 10^8$ raising the level of $3$-adic even representations of various conductors.
Based on the data, a conjecture is formulated concerning the distribution of certain lines in the plane. 
By an application of Wiles' formula, the conjecture is shown to imply that the density of primes raising the level of a $p$-adic even representation is $$\frac{p-1}{p},$$
in agreement with the density of $2/3$ for $p=3$ observed in the data.  
\end{abstract} \newpage
\tableofcontents
\listoftables

\section{Introduction} 
An odd, complex, 2-dimensional Galois representation is said to have level $N$ if it arises from a modular form of weight 2 and level $N$. In this case, the primes dividing the integer $N$ are exactly the primes at which the Galois representation is ramified.
During the work that lead to a proof of the Shimura-Taniyama-Weil conjecture by Wiles, Taylor and others \cite{Wiles} \cite{Taylor-Wiles}, where geometric Galois representations were under study, 
the need arose for a theory capable of adding new primes to the level. Ribet was the first person to develop such a theory:
By taking advantage of the geometric nature of the representations, he successfully brought in techniques from algebraic geometry
to achieve his theory of level-raising in \cite{Ribet}. 
Unfortunately, even representations are not known to come from geometry.
Nevertheless, in the philosophy of R.  P. Langlands,  complex 2-dimensional even representations should correspond to automorphic representations of $\operatorname{GL}(2)$ over $\Q$ attached to classical Maass wave forms, so a theory of level-raising for even representations is expected to be attainable. The first steps towards such a theory was taken in \cite{even2}, and the present work is a continuation of this endeavour. After identifying under what conditions a prime can be added to the level, it is natural to study how frequently such primes occur. 
Indeed, the distribution of primes subject to various conditions of interest is a classical theme in number theory. This paper presents progress on the distribution of primes raising the level of a certain family of even representations, both from a computational and theoretical point of view. 

In \cite{classification}, it was shown that all $3$-adic even representations of tetrahedral or octahedral type with image contaning $\operatorname{SL}(2,\Z_3)$ and of prime conductor falls into an explicit family $\rho^{(\ell)}$ indexed by primes $\ell$.
In this paper, we will generalize the theory of level raising in \cite{even2} to this family. Then, we will use the three smallest members, $\rho^{(163)}$, $\rho^{(277)}$, and $\rho^{(349)}$, to compute data on the distribution of primes raising their respective levels. 

In \cite{even2}, data on primes $v$ raising the level of $\rho^{(349)}$ was presented for $v$ up to 1000. In this paper, we expand this dataset as follows: 
For the $3$-adic even representations of conductors $\ell = 163,  277$ and $349$, 
we compute all primes $v$ raising their level for $v$ up to $10^8$.
We are interested in how frequently primes can be added to the level;
numerical evidence is presented suggesting that the density of primes $v$ raising the level is $2/3$, independently of the conductor $\ell$. 

In \cite{even2}, 
a Chebotarev condition was designed to address the concern that when a prime $v$ raised the level, the new even representation was not automatically ramified at $v$. In this paper, we show that this concern can be disregarded from a statistical point of view, since ramification at the new prime $v$ almost always happens: the density of primes at which level-raising is not genuine is a thin set of density zero. 

In order to explain how the observed density of primes raising the level arises (the $2/3$-statistics in our data), 
we show that it is a special case of a general theorem:
For any even $p$-adic representation
$$
\rho: \Gal(\overline{\Q} / \Q) \longrightarrow \operatorname{SL}(2, \Z_p),
$$
the density of primes $v$ raising the level of $\rho$ is governed by the probability distribution over primes of certain ramified lines in the plane. 
If this distribution is uniform, then the density of primes raising the level is 
$$\frac{p-1}{p}.$$
We prove the density theorem for general $p$-adic representations by an application of Wiles' formula \cite[Prop. 1.6]{Wiles}. 
When $p=3,$ this density agrees with the 2/3-statistics observed in the data. 
Finally, we formulate a conjecture, jointly with Ramakrishna, that the probability distribution over primes of lines in the plane is indeed uniform. This problem has been open since at least 1998. 

\subsection{Acknowledgments}
I am grateful to Andres Fernandez Herrero, Ravi Ramakrishna, Paul Nelson, Andrew Sutherland, and Mark Watkins for their support. 
This work was supported by research grant VIL54509 from Villum Fonden.

\section{Level-raising at primes $v$}

For a prime $\ell$, let $\Q(\zeta_\ell)$ denote the cyclotomic field 
obtained by adjoining a primitive $\ell$th root of unity $\zeta_\ell$ to $\Q$. 
In \cite{auto}, an explicit family of even residual representations 
$$\overline{\rho}^{(\ell)}: \Gal(\overline{\Q}/\Q) \to \operatorname{SL}(2,\F_3)$$ 
was constructed for primes numbers $\ell$ with the properties that (1) $\ell \equiv 1 \bmod 3$ and (2) the cubic subfield 
$L$ of $\Q(\zeta_\ell)$ has class number $h_L$ divisible by 2. 
Let 
$K=\Q(\overline{\rho}^{(\ell)})$ be the 
 field fixed by the kernel of the  projective representation $\pi \circ \overline{\rho}^{(\ell)}$ obtained by composing $\overline{\rho}^{(\ell)}$ with the natural homomorphism
$$
\pi: \operatorname{SL}(2,\F_3) \to 
\operatorname{PSL}(2,\F_3).
$$
The field $K$  
is totally real with discriminant $\ell^2$ and of tetrahedral type: $\Gal(K/\Q)$ is isomorphic to the symmetry group $A_4$ of a tetrahedron. 

In \cite{classification}, the subset of the family of $\overline{\rho}^{(\ell)}$
that admit surjective lifts 
$$
\rho^{(\ell)}:  \Gal(\overline{\Q}/\Q) \to \operatorname{SL}(2,\Z_3)$$ 
was identified. Here, each $\rho^{(\ell)}$ is ramified only at $\ell$ and at $3$; the primes $S= \{3,\ell \}$ is referred to as the level of $\rho^{(\ell)}$. In this section, we will identify a class of primes $v$ for which there exists an even, surjective representation
$$
\rho^{(\ell, v)}: \Gal(\overline{\Q}/\Q) \to \operatorname{SL}(2,\Z_3)
$$
of level $\{3, \ell, v\}$ such that 
$$
\rho^{(\ell, v)} \equiv \overline{\rho}^{(\ell)} \bmod 3.
$$
If we can find a prime $v$ for which such a $\rho^{(\ell,v)}$ exists, then we say that $v$ raises the level of $\rho^{(\ell)}$ (or that $v$ can be added to the level of $\rho^{(\ell)}$). We will repeat the definition of level-raising below in Definition \ref{deflev}.

We will study level-raising of $\rho^{(\ell)}$ by means of Selmer groups in Galois cohomology, which we will now introduce. For now, 
let $$\overline{\rho}: \Gal(\overline{\Q}/\Q) \to \operatorname{GL}(2,\F_p)$$
be any residual representation over the finite field $\F_p$ with $p$ elements, and let $\Ad$ be the Galois module obtained by
composing $\overline{\rho}$ with the adjoint representation 
of $\operatorname{GL}(2,\F_p)$ on the Lie algebra
$\mathfrak{sl}(2,\F_p)$ of traceless $2\times 2$ matrices over $\F_p$.  
For a place $q$ in $\Q$, a subgroup 
$\mathcal{N}_q$ of the local cohomology group 
$H^1(\Gal(\overline{\Q}_q/\Q_q), \Ad)$
is called a local Selmer condition at $q$.
Let $S$ be a set of places in  $\Q$ containing $p$ and the infinite place $\infty$. A (global) Selmer condtion 
$\mathcal{N}=(\mathcal{N}_q)_{q\in S}$ is defined as a collection of local Selmer conditions at $q\in S$. 
Let $\Q_S$ be the maximal extension of $\Q$ unramified outside $S$. The Selmer group attached to a Selmer condition $\mathcal{N}$ is, by definition, the subgroup of global cohomology classes  
$$f \in H^1(\Gal(\Q_S/\Q),\Ad )$$ with the property that the restriction 
$$
f|_{\Gal(\overline{\Q}_q/\Q_q)} \in \mathcal{N}_q
$$
for all $q\in S$. The Selmer group is denoted by 
$H^1_\mathcal{N}(\Gal(\Q_S/\Q), \Ad).$ 
Similarly, let $\Ad^* =\operatorname{Hom}(\Ad, \mu_p)$, where $\mu_p$ is the $p$th roots of unity. 
Recall that in Galois cohomology, there is a canonical, nondegenerate local pairing on 
$$
H^1(\Gal(\overline{\Q}_q/\Q_q), \Ad)\times H^1(\Gal(\overline{\Q}_q/\Q_q), \Ad^*), 
$$
so we may let $\mathcal{N}_q^\perp$ denote the annihilator in $H^1(\Gal(\overline{\Q}_q/\Q_q)$ with respect to this local pairing. 
The dual Selmer group attached to the dual Selmer condition 
$\mathcal{N}^\perp = (\mathcal{N}_q^\perp)_{q\in S}$
is defined as the subgroup of global cohomology classes 
$$\varphi \in H^1(\Gal(\Q_S/\Q),\Ad^* )$$ 
such that 
$$
\varphi|_{\Gal(\overline{\Q}_q/\Q_q)} \in \mathcal{N}_q^\perp
$$
for all $q\in S$. The dual Selmer group is denoted
$$H^1_{\mathcal{N}^\perp}(\Gal(\Q_S/\Q), \Ad).$$
Having defined these notions, we can state the key idea 
in the Galois cohomological approach to the deformation theory of Galois representations: Identity a Selmer condition $\mathcal{N}$ such that the Selmer group and the dual Selmer group attain the same dimension over $\F_p$.
In this case, we will refer to the pair $(\overline{\rho}, \mathcal{N})$ 
consisting of a residual representation $\overline{\rho}$ unramified outside a finite set of places $S$ and the associated Selmer condition $\mathcal{N}=(\mathcal{N}_q)_{q\in S}$ as a \emph{balanced global setting}, and we will refer to the common dimension, $n$, over $\F_p$ as the \emph{rank} 
of the balanced global setting:
$$
n=
H^1_{(\mathcal{N}_q)_{q\in S}}(\Gal(\Q_{S}/\Q), \Ad)=H^1_{(\mathcal{N}_q^\perp)_{q\in S}}(\Gal(\Q_{S}/\Q), \Ad^*).
$$

To a Selmer condition $\mathcal{N}$, we attach a global deformation ring denoted $R_\mathcal{N}$ whose tangent space is precisely the Selmer group $H^1_\mathcal{N}(\Gal(\Q_S/\Q),\Ad)$. 
For each $q \in S$, suppose we have identified a class $\mathcal{C}_q$ of local deformations 
$\rho$ of $\overline{\rho}|_{\Gal(\overline{\Q}_q/\Q_q)}$ such  that $\rho$ factors through a smooth quotient 
$\phi_q: R_q \to \Z_p[[T_1,\ldots,T_{n_q}]]$ of the local deformation ring $R_q$ at $q$. 
To such a class $\mathcal{C}_q$ of deformations, we choose the Selmer condition $\mathcal{N}_q$ to be the subgroup of $H^1(\Gal(\overline{\Q}_q/\Q_q),\Ad)$ 
obtained from dualizing the map on tangent spaces induces by $\phi_q$.  
Then we define $R_\mathcal{N}$ as the global deformation ring parameterizing deformations $\rho$ of $\overline{\rho}$ with the property that $\rho|_{\Gal(\overline{\Q}_q/\Q_q)} \in \mathcal{C}_q$ for all $q\in S$.

In \cite{classification}, the set $S$ was taken to be $\{3,\ell, \infty\},$ and a Selmer condition was identified for which the global setting was balanced of rank zero (that is, both the Selmer and dual Selmer groups were trivial), and where 
$$R_\mathcal{N} =R_{(\mathcal{N}_q)_{q\in S}} \simeq \Z_3.$$

In this paper, we will identify primes $v$ and a local Selmer condition $\mathcal{N}_v$ such that the global setting remains balanced after allowing ramification at $v$:
$$
H^1_{(\mathcal{N}_q)_{q\in S\cup \{ v \}}}(\Gal(\Q_{S\cup \{ v \} }/\Q), \Ad)=H^1_{(\mathcal{N}_q^\perp)_{q\in S\cup \{ v \}}}(\Gal(\Q_{S\cup \{v \} }/\Q), \Ad^*),
$$
and such that the rank remains zero. In this case, 
$$R_{(\mathcal{N}_q)_{q\in S\cup \{ v \}}} \simeq \Z_3,$$
and therefore the global deformation $\rho^{(\ell,v)}$
attached to the ring $R_{(\mathcal{N}_q)_{q\in S\cup \{ v \}}}$ is the desired even Galois representation raising the level of $\rho^{(\ell)}$.
If the global setting at $S$ is balanced of rank zero, then a sufficient condition for the global setting at $S\cup \{ v\}$ to remain balanced of rank zero is:
$$
\dim \mathcal{N}_v = H^0(\Gal(\overline{\Q}_v/\Q_v), \Ad)
$$
This is an immediate consequence of Wiles' formula \cite[Proposition 1.6]{Wiles}. 

\begin{definition}[Auxiliary primes] \label{aux}
Let $\rho^{(\ell)}$ be a residual even representations of tetrahedral type and level $\ell$. 
Let $C^{(3)}$ be the Chebotarev set of primes $v$ such that 
$v \equiv 1 \mod 3$ and such that the Frobenius automorphisms above $v$ 
in $\Q(\overline{\rho}^{(\ell)})/\Q$ has order 3.
We will call the primes $v$ in $C^{(3)}$ for auxiliary primes
(later, we will define Chebotarev sets $C^{(p)}$ for all $p\geqslant 3.$)
\end{definition}
The class $C^{(3)}$ of auxiliary primes is a class which larger than the set of primes that we eventually will be able to add to the level of $\rho^{(\ell)}$. It will be convenient to work with the class $C^{(3)}$ because all primes we can add to the level will necessarily lie in $C^{(3)}$. Below, we will be taking primes $v\in C^{(3)}$, and then we will identify what condition(s) we need the auxiliary prime $v$ to satisfy in order to raise the level of $\rho^{(\ell)}.$

Let $v \in C^{(3)}$. For ease of notation, let 
$G_v:= \Gal(\overline{\Q}_v/\Q_v)$. 
Recall that the maximal tame quotient of $G_v$ is topologically generated by
$\sigma_v$ (a lift of the Frobenius automorphism at $v$) and inertia $\tau_v$, subject to the relation $\sigma_v \tau_v \sigma_v^{-1}= (\tau_v)^v$. 
Since the standard $efg$-decomposition of $v$ in the number field $K=\Q(\overline{\rho}^{(\ell)})$ 
is $$e=1, f=3, g=4,$$ 
we may fix a basis such that 
$$
\overline{\rho}^{(\ell)}|_{G_v}: 
\sigma_v \mapsto 
\begin{pmatrix}
1 & 1 \\
0 & 1
\end{pmatrix},
\quad 
 \tau_v \mapsto 
\begin{pmatrix}
1 & 0 \\
0 & 1
\end{pmatrix}.
$$
\begin{lemma} \label{localranks3}
The local cohomology groups at $v$ have the following ranks:
$$
\dim H^0(G_v, \operatorname{Ad}^0(\overline{\rho}^{(\ell)})) =
\dim H^2(G_v, \operatorname{Ad}^0(\overline{\rho}^{(\ell)})) = 1,
\quad 
\dim H^1(G_v, \operatorname{Ad}^0(\overline{\rho}^{(\ell)})) = 2.
$$
\end{lemma}
\begin{proof}
Since $H^0(G_v, \operatorname{Ad}^0(\overline{\rho}^{(\ell)}))$ is defined as the subspace of $\operatorname{Ad}^0(\overline{\rho}^{(\ell)})$
invariant under the Galois action, it is immediate that $\dim H^0(G_v, \operatorname{Ad}^0(\overline{\rho})) =1$.
Local duality implies that 
$$
\dim H^2(G_v, \Ad) =
\dim H^0(G_v, \Ad^*) = 1.
$$
Finally, it follows from the local Euler characterstic that $\dim H^1(G_v, Ad^0(\overline{\rho}))=2$.
\end{proof}
Let $\mathcal{C}_v$ be the set of deformations of 
$\overline{\rho}|_{G_v}$ to $\Z/3^n \Z$ for integers $n\geqslant 1$ of the form
$$
\sigma_v \mapsto 
\begin{pmatrix}
\sqrt{v} & 1 + 3x \\
0 & \sqrt{v}^{-1}
\end{pmatrix},
\quad 
 \tau_v \mapsto 
\begin{pmatrix}
1 & 3y \\
0 & 1
\end{pmatrix}.
$$
Note that $v\equiv 1 \mod 3$ implies $\sqrt{v} \in 1+3\Z_3.$

\begin{lemma} \label{v}
Let $v\in C^{(3)}$. 
Let  $\mathcal{N}_v$ be the subspace of $ H^1(G_v, \Adl)$ that preserves $\mathcal{C}_v$.
The local Selmer condition $\mathcal{N}_v$ has  
$$
\dim \mathcal{N}_v = \dim H^0(G_v, \operatorname{Ad}^0(\overline{\rho}^{(\ell)})) .
$$
In fact, $\mathcal{N}_v$ is spanned by the local cohomology class
$$
r_v: 
\sigma_v \mapsto 
\begin{pmatrix}
0 & 0 \\
0 & 0
\end{pmatrix},
\quad 
 \tau_v \mapsto 
\begin{pmatrix}
0 & 1 \\
0 & 0
\end{pmatrix}.
$$
\end{lemma}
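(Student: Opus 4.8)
The plan is to make $H^{1}(G_{v},\Ad)$ explicit by cocycles, to give an intrinsic description of $\mathcal{C}_{v}$, and then to identify $\mathcal{N}_{v}$ as the line in $H^{1}$ on which the first-order perturbation of $\operatorname{tr}\pi(\sigma_{v})$ vanishes.

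First I would compute $H^{1}(G_{v},\Ad)$ directly. Since $\overline{\rho}$ is unramified at $v$ with $\operatorname{Frob}_{v}\mapsto u:=\begin{pmatrix}1&1\\0&1\end{pmatrix}$, the group $G_{v}$ acts on $\Ad$ through its unramified quotient, $\operatorname{Frob}_{v}$ acting as $\operatorname{Ad}(u)$, and the cohomology is computed from the tame presentation $\langle\sigma_{v},\tau_{v}\mid\sigma_{v}\tau_{v}\sigma_{v}^{-1}=\tau_{v}^{v}\rangle$, the wild inertia being pro-$v$ and acting trivially on the $3$-group $\Ad$. Feeding a $1$-cochain $c$ into the braid relation and using that inertia acts trivially on $\Ad$ shows that $c$ is a cocycle exactly when $(\operatorname{Ad}(u)-v)\,c(\tau_{v})=0$ with $c(\sigma_{v})$ unrestricted; as $v\equiv1\bmod 3$ this forces $c(\tau_{v})\in\ker(\operatorname{Ad}(u)-1)=\F_{3}\begin{pmatrix}0&1\\0&0\end{pmatrix}$, so $\dim Z^{1}=4$. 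The coboundaries are $c_{m}\colon\sigma_{v}\mapsto(\operatorname{Ad}(u)-1)m,\ \tau_{v}\mapsto0$ with $(\operatorname{Ad}(u)-1)$ of image $\big\langle\begin{pmatrix}0&1\\0&0\end{pmatrix},\begin{pmatrix}1&0\\0&-1\end{pmatrix}\big\rangle$, so $\dim B^{1}=2$ and $H^{1}(G_{v},\Ad)$ is two-dimensional (consistently with Lemma \ref{localranks3}), with basis the unramified class $[g^{\mathrm{unr}}_{v}]$, $g^{\mathrm{unr}}_{v}\colon\sigma_{v}\mapsto\begin{pmatrix}0&0\\1&0\end{pmatrix},\tau_{v}\mapsto0$, and $[r_{v}]$. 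In particular $r_{v}$ is a genuine cocycle — since $\begin{pmatrix}0&1\\0&0\end{pmatrix}\in\ker(\operatorname{Ad}(u)-v)$ for $v\equiv1\bmod 3$ — and is not a coboundary.

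Next I would describe $\mathcal{C}_{v}$ intrinsically: a deformation $\pi$ of $\overline{\rho}|_{G_{v}}$ is strictly equivalent to one of the displayed form $\pi_{v}$ precisely when $\pi|_{I_{v}}$ is unipotent and $\operatorname{tr}\pi(\sigma_{v})=\sqrt{v}+\sqrt{v}^{-1}$ (equivalently $\sqrt v$ is an eigenvalue of $\pi(\sigma_{v})$); for if $\pi(\tau_{v})\ne1$ it is conjugate to $\begin{pmatrix}1&3y\\0&1\end{pmatrix}$, the relation $\sigma_{v}\tau_{v}\sigma_{v}^{-1}=\tau_{v}^{v}$ then forces $\pi(\sigma_{v})$ to be upper triangular, and the trace condition together with a remaining upper-triangular conjugation brings $\pi(\sigma_{v})$ to $\begin{pmatrix}\sqrt v&1+3x\\0&\sqrt v^{-1}\end{pmatrix}$ (the case $\pi(\tau_{v})=1$ is the subcase $y=0$). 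A cocycle $c$ preserves $\mathcal{C}_{v}$ iff twisting a $\pi\in\mathcal{C}_{v}$ by $3^{n}c$ keeps it in $\mathcal{C}_{v}$; twisting multiplies $\pi(\tau_{v})$ by $I+3^{n}c(\tau_{v})$, which preserves unipotency of $\pi|_{I_{v}}$ because $c(\tau_{v})\in\F_{3}\begin{pmatrix}0&1\\0&0\end{pmatrix}$ for every cocycle, and it changes $\operatorname{tr}\pi(\sigma_{v})$ by $3^{n}\operatorname{tr}\!\big(c(\sigma_{v})u\big)$. The functional $[c]\mapsto\operatorname{tr}(c(\sigma_{v})u)$ is well defined on $H^{1}(G_{v},\Ad)$ (it kills coboundaries, as $\operatorname{tr}((umu^{-1}-m)u)=0$), and it takes the value $1$ on $g^{\mathrm{unr}}_{v}$ and $0$ on $r_{v}$; hence $\mathcal{N}_{v}$ is exactly its kernel $\F_{3}[r_{v}]$, which has dimension $1=\dim H^{0}(G_{v},\Ad)$ by Lemma \ref{localranks3}. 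Thus $\mathcal{N}_{v}$ is spanned by $r_{v}$ and the pair $(\mathcal{N}_{v},\mathcal{C}_{v})$ is balanced; one checks along the way that the twists by $3^{n}r_{v}$ — which move the top $3$-adic digit of $y$ — exhaust the $\mathcal{C}_{v}$-lifts at each level, so that $\mathcal{C}_{v}$ does define a smooth quotient $R_{v}\to\Z_{3}[[T]]$ with cotangent space $\mathcal{N}_{v}$.

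The one point that requires care is the dimension count: the family $\pi_{v}$ has the two visible parameters $x$ and $y$, so it is not a priori clear that $\mathcal{N}_{v}$ is one-dimensional. The resolution is that the constraint defining $\mathcal{C}_{v}$ pins the Frobenius eigenvalue to $\sqrt v$, which simultaneously excludes the unramified line from $\mathcal{N}_{v}$ (twisting by $g^{\mathrm{unr}}_{v}$ perturbs $\operatorname{tr}\pi(\sigma_{v})$ by the unit $1+3x$) and collapses the $x$-direction of the family to a coboundary — it equals $(\operatorname{Ad}(u)-1)\begin{pmatrix}1&0\\0&-1\end{pmatrix}$ up to a unit, hence is pure strict equivalence — so the only surviving direction is the ramified class $r_{v}$. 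Everything else is the routine cocycle bookkeeping above and the standard check that the $\mathcal{C}_{v}$-lifts form a torsor under $\F_{3}r_{v}$ at every stage.
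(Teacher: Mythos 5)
The paper states Lemma \ref{v} without proof, so there is no argument of the authors' to compare against; your write-up supplies one, and it is correct. Your approach is the standard one: compute $H^1(G_v,\operatorname{Ad}^0(\overline{\rho}))$ from the tame presentation $\langle\sigma_v,\tau_v\mid\sigma_v\tau_v\sigma_v^{-1}=\tau_v^v\rangle$ (wild inertia is pro-$v$ and acts trivially on a $3$-group), identify the two-dimensional $H^1$ with basis $[g^{\mathrm{unr}}_v]$ and $[r_v]$, and then characterize $\mathcal{N}_v$ as the kernel of the well-defined linear functional $[c]\mapsto\operatorname{tr}(c(\sigma_v)\,\overline{\rho}(\sigma_v))$ that records the first-order change of $\operatorname{tr}\pi(\sigma_v)$ under twisting. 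The cocycle condition $(\operatorname{Ad}(u)-v)c(\tau_v)=0$ with $v\equiv1\bmod3$, the identification $\ker(\operatorname{Ad}(u)-1)=\F_3\begin{pmatrix}0&1\\0&0\end{pmatrix}$, and the observation that the $x$-direction of the family $\pi_v$ is the coboundary of $\begin{pmatrix}1&0\\0&-1\end{pmatrix}$ are all verified correctly, and together they pin $\mathcal{N}_v$ to the ramified line $\F_3[r_v]$, matching $\dim H^0(G_v,\operatorname{Ad}^0(\overline{\rho}))=1$ from Lemma \ref{localranks3}.

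Two small remarks, neither a gap. First, your intrinsic description of $\mathcal{C}_v$ (unipotent on inertia together with $\operatorname{tr}\pi(\sigma_v)=\sqrt{v}+\sqrt{v}^{-1}$) is the right one to use, since $\mathcal{N}_v$ must be defined on strict-equivalence classes while the paper writes $\mathcal{C}_v$ as a set of explicit lifts; you should perhaps say explicitly that you are taking $\mathcal{C}_v$ to be the closure of the displayed family under strict equivalence, which is what makes the cotangent-space computation meaningful. Second, once $\pi(\tau_v)$ is put into the form $\begin{pmatrix}1&3y\\0&1\end{pmatrix}$ and $\pi(\sigma_v)$ is forced upper triangular with diagonal $(\sqrt v,\sqrt v^{-1})$, the top-right entry of $\pi(\sigma_v)$ is automatically $\equiv1\bmod3$, so the final upper-triangular conjugation you invoke is not actually needed; the representative is already of the displayed shape.
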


\begin{definition} \label{split}
    Let $v\in C^{(3)}$ and let $f_v \in H^1(G_v, \operatorname{Ad}^0(\overline{\rho}^{(\ell)})).$
Then $f_v$ gives rise to a deformation of $\overline{\rho}|_{G_v}$ to the dual numbers $\F_3[\varepsilon]$ whose kernel fixes an extension $E_{f_v}/\Q_v$, and  
   by local class field theory, $E_{f_v}/\Q_v$ has degree 9.
 We say that $f_v$ is \emph{split} if the exact sequence
    $$
    1 \to \Gal(E_{f_v}/\Q_v(\overline{\rho}|_{G_v})) \to \Gal(E_{f_v} /\Q_v) \to \Gal( \Q_v(\overline{\rho}|_{G_v}) /\Q_v) \to 1
    $$
splits, and otherwise we say that $f_v$ is \emph{nonsplit}.
\end{definition}

\begin{lemma} \label{N_v}
Let $v \in C^{(3)}$. 
The local cohomology group $H^1(G_v, \Adl)$ is spanned by a split and a nonsplit cohomology class, and the local Selmer condition $\mathcal{N}_v$ is the subspace 
spanned by a split class. 
\end{lemma}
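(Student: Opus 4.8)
The plan is to reduce the split/nonsplit dichotomy of Definition \ref{split} to a linear-algebra condition on $H^1(G_v,\Ad)$ and then read off the answer from explicit cocycles already in hand. Write $M=\Ad$ and $u=\overline{\rho}(\sigma_v)=\left(\begin{smallmatrix}1&1\\0&1\end{smallmatrix}\right)$. Since $u$ has order $3=p$, the operator $\operatorname{Ad}(u)$ on the $3$-dimensional $\F_3$-space $M$ is regular unipotent; setting $\nu:=\operatorname{Ad}(u)-1$, the map $\nu$ is a single nilpotent Jordan block, so $\ker\nu=\F_3\left(\begin{smallmatrix}0&1\\0&0\end{smallmatrix}\right)$, $\operatorname{im}\nu=\ker\nu^2$ is $2$-dimensional, and the norm operator $N_u:=1+\operatorname{Ad}(u)+\operatorname{Ad}(u)^2$ equals $\nu^2$ (using $3=0$ and $\nu^3=0$). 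By Lemma \ref{localranks3} and Lemma \ref{v}, $H^1(G_v,M)$ is $2$-dimensional, and since $\mathcal{N}_v=\F_3 r_v$ consists of ramified classes it is complementary to $H^1_{\operatorname{unr}}(G_v,M)$; the latter line is spanned by a class $g^{\operatorname{unr}}_v$ represented by the cocycle $\sigma_v\mapsto F$, $\tau_v\mapsto 0$ with $F=\left(\begin{smallmatrix}0&0\\1&0\end{smallmatrix}\right)\notin\operatorname{im}\nu$.

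I would then make Definition \ref{split} explicit. For a nonzero class $f\in H^1(G_v,M)$ put $\widetilde{\rho}=(1+\varepsilon f)\overline{\rho}\colon G_v\to\operatorname{GL}_2(\F_3[\varepsilon])$ with fixed field $E_f:=E_{f_v}$, and set $H:=\Gal(\overline{\Q}_v/\Q_v(\overline{\rho}|_{G_v}))$. Then $\Gal(E_f/\Q_v(\overline{\rho}|_{G_v}))=1+\varepsilon A$, where $A\subseteq M$ is the image of $f|_H$; by Definition \ref{split} it has order $3$, and the cocycle identity $f(\sigma_v h\sigma_v^{-1})=\operatorname{Ad}(u)f(h)$ for $h\in H$ shows $A$ is $\operatorname{Ad}(u)$-stable, so $A=\ker\nu$. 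The exact sequence of Definition \ref{split} splits precisely when the distinguished generator $\overline{\rho}(\sigma_v)$ lifts to an order-$3$ element of $\Gal(E_f/\Q_v)$, i.e. when $\widetilde{\rho}(\sigma_v)(1+\varepsilon m)$ has order $3$ for some $m\in A$. Expanding with the cocycle relation and $\overline{\rho}(\sigma_v)^3=I$ yields $\bigl(\widetilde{\rho}(\sigma_v)(1+\varepsilon m)\bigr)^3=1+\varepsilon N_u\bigl(f(\sigma_v)+m\bigr)=1+\varepsilon\nu^2\bigl(f(\sigma_v)\bigr)$, the last equality because $m\in A=\ker\nu\subseteq\ker\nu^2$. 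Hence $f$ is \emph{split} if and only if $f(\sigma_v)\in\ker\nu^2=\operatorname{im}\nu$, a condition on $f$ alone.

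Applying this criterion to the basis finishes both assertions. Since $r_v(\sigma_v)=0\in\operatorname{im}\nu$, the class $r_v$ — and hence every nonzero element of $\mathcal{N}_v$ — is split, which is the statement about $\mathcal{N}_v$. Since $g^{\operatorname{unr}}_v(\sigma_v)=F\notin\operatorname{im}\nu$, the class $g^{\operatorname{unr}}_v$ is nonsplit, so $H^1(G_v,M)$ is spanned by the split class $r_v$ together with the nonsplit class $g^{\operatorname{unr}}_v$. (The same identity shows $a\,g^{\operatorname{unr}}_v+b\,r_v$ is split exactly when $a=0$, so the split classes are precisely $\mathcal{N}_v\smallsetminus\{0\}$; this is worth recording.) The main obstacle is the middle paragraph: one has to check that the extension in Definition \ref{split} splits exactly when $\overline{\rho}(\sigma_v)$ admits an order-$3$ lift, and to carry out the cube computation $\bigl(\widetilde{\rho}(\sigma_v)(1+\varepsilon m)\bigr)^3=1+\varepsilon\nu^2(f(\sigma_v))$ correctly — this needs the noncommutative cocycle bookkeeping and uses $v\equiv 1\bmod 3$ (already built into $v\in C^{(3)}$) to see that $\operatorname{Ad}(u)$ fixes the inertia contribution $A=\ker\nu$ and that $A\subseteq\ker\nu^2$. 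Everything after that is finite linear algebra over $\F_3$.
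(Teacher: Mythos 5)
Your proof is correct, and it supplies precisely the verification that the paper's one-line "See Lemma \ref{v} and Definition \ref{split}" leaves implicit (and otherwise defers to \cite{even2}). The reduction of ``split'' to the linear condition $f(\sigma_v)\in\operatorname{im}\nu=\ker\nu^2$, via the identity $\bigl(\widetilde\rho(\sigma_v)(1+\varepsilon m)\bigr)^3=1+\varepsilon\,\nu^2\bigl(f(\sigma_v)\bigr)$ for $m\in A=\ker\nu$, is exactly the computation one needs, and applying it to $r_v$ (with $r_v(\sigma_v)=0$) and to the unramified generator $g_v^{\operatorname{unr}}$ (with $g_v^{\operatorname{unr}}(\sigma_v)=\left(\begin{smallmatrix}0&0\\1&0\end{smallmatrix}\right)\notin\operatorname{im}\nu$) gives both parts of the lemma.
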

\begin{proof}
    This is immediate from Lemma \ref{v} and Definition \ref{split}.
\end{proof}

\begin{lemma} \label{remainbalanced3}
Suppose the global setting is balanced at $S$.
If we allow ramification at $v\in C^{(3)}$, then the global setting remains balanced in the sense that
    $$
\dim H^1_{\mathcal{N}} (G_{S\cup\{v\}}, \operatorname{Ad}^0(\overline{\rho}^{(\ell)}))
=\dim H^1_{\mathcal{N}^\perp} (G_{S\cup\{v\}}, \operatorname{Ad}^0(\overline{\rho}^{(\ell)})^*).$$
\end{lemma}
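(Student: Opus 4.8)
The plan is to derive the claim from Wiles' formula by comparing it for the two sets $S$ and $S' := S \cup \{v\}$, applied to the module $M := \Ad$. First I would observe that the global invariants $H^0(G_{S'}, \Ad)$ and $H^0(G_{S'}, \Addual)$ agree with $H^0(G_S, \Ad)$ and $H^0(G_S, \Addual)$: the Galois action on these modules is unramified outside $S$, hence factors through the surjection $G_{S'} \twoheadrightarrow G_S$, so the spaces of invariants are unchanged. (In our setting both vanish because $\overline{\rho}$ has image containing $\operatorname{SL}(2,\F_3)$, but that is not needed for the comparison.) I would also record that for each $q \in S$ the local condition $\mathcal{N}_q \subseteq H^1(G_q, \Ad)$ and its annihilator $\mathcal{N}_q^\perp$ are defined purely locally, so they are literally the same subspaces whether we work over $S$ or over $S'$.

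Next I would write down Wiles' formula once for $S$ and once for $S'$ and subtract. On the right-hand side the global $H^0$-terms cancel by the first observation, and every local term indexed by $q \in S$ cancels by the second; the only surviving term is the new one at $v$. Thus
$$
\Big(\dim H^1_{\mathcal{N}}(G_{S'}, \Ad) - \dim H^1_{\mathcal{N}^\perp}(G_{S'}, \Addual)\Big) - \Big(\dim H^1_{\mathcal{N}}(G_{S}, \Ad) - \dim H^1_{\mathcal{N}^\perp}(G_{S}, \Addual)\Big) = \dim \mathcal{N}_v - \dim H^0(G_v, \Ad).
$$
By Lemma \ref{v}, the pair $(\mathcal{N}_v, \mathcal{C}_v)$ is locally balanced, i.e. $\dim \mathcal{N}_v = \dim H^0(G_v, \Ad)$, so the right-hand side is zero. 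Since the global setting is balanced at $S$ by hypothesis — the second parenthesis vanishes — the first parenthesis vanishes as well, which is exactly the assertion that the global setting remains balanced over $S \cup \{v\}$.

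I do not expect any real obstacle here: the content is entirely bookkeeping with Wiles' formula, and the one point to handle with care is keeping the local terms $H^0(G_q, -)$ that appear inside Wiles' formula separate from the global terms $H^0(G_S, -)$, together with the remark that the latter are insensitive to enlarging the ramification set. The substantive input — that adjoining a prime in $C^{(3)}$ equipped with the Selmer condition $\mathcal{N}_v$ contributes nothing to the Euler-characteristic balance — is precisely the local balancing in Lemma \ref{v}.
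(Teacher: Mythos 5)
Your proof is correct and is essentially the paper's argument made explicit: the paper's proof says only that the claim "follows from Lemma \ref{v} and Wiles' formula," and your bookkeeping — comparing Wiles' formula over $S$ and $S\cup\{v\}$, noting the global $H^0$-terms and the local terms at $q\in S$ cancel, and invoking the local balance $\dim\mathcal{N}_v=\dim H^0(G_v,\Ad)$ from Lemma \ref{v} to kill the remaining term — is precisely what that terse proof leaves to the reader.
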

\begin{proof}
    This follows from Lemma \ref{v} and Wiles' formula \cite[Prop 1.6]{Wiles}. 
\end{proof}

\begin{lemma}
The density of primes $v$ such that $v\in C^{(3)}$ is 1/3.
\end{lemma}
\begin{proof}
There are four conjugacy classes in $A_4$:
\begin{IEEEeqnarray*}{rCl}
C_1 & := &\{ 1\}, \\
C_2 & := &\{ (13)(23), (14)(23)\}, \\
C_3 & := &\{ (123), (243), (134), (142)\}, \\
C_4 & := & \{ (132), (234), (143), (124)\}.
\end{IEEEeqnarray*}
A prime $v$ has inertial degree $f(v,K/\Q)= 3$ if and only if
the conjucagy class of Frobenius automorphisms above $v$ is either $C_3$ or $C_4$.
Let $(v,K/\Q)$ denote the conjugacy class of Frobenius automorphisms at $v$ in $K/\Q$.
By Chebotarev's theorem, the density of primes $v$ such 
that $v\equiv 1\mod 3$ and such that $(v, K/\Q) = C_3$ or $(v, K/\Q) = C_4$ 
is 
$$
\frac{1}{|(\Z/3\Z)^\times|}
\bigg( \frac{|C_3|}{|A_4|} +\frac{|C_4|}{|A_4|} \bigg)  
=
\frac{1}{2}\bigg( \frac{4}{12}+\frac{4}{12} \bigg) = 1/3.
$$
\end{proof}

For a number field $E/\Q$, a prime $p$, and a set of places $S$, let $E^{(p)}_S$ denote the maximal $p$-elementary abelian extension of $E$ unramified away from $S$. The field $E^{(p)}_S$ is also called the $p$-Frattini extension of $E$ unramified outside $S$.

\begin{lemma} \label{T_v}
    Let $v\in C^{(3)}$. Let $T_v$ be the primes in $K$ above $v$ and $3$.
    Let $K^{(3)}_{T_v}$ be the 3-Frattini extension of $K$ unramified outside $T_v$.
    Then
    $$
\Gal(K^{(3)}_{T_v} / K) \simeq \Adl \oplus \F_3 \oplus \F_3 
    $$
as $\F_3[\Gal(K/\Q)]$ modules. 
\end{lemma}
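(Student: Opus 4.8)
The plan is to identify $\Gal(K^{(3)}_{T_v}/K)$ as an $\F_3[\Gal(K/\Q)]$-module by combining global class field theory with the local and global cohomological data already set up. The first point is that every composition factor of $\Gal(K^{(3)}_{T_v}/K)$ is isomorphic to $\F_3$ or to $\Ad$: by the representation lemma above these are the only absolutely irreducible $\F_3$-representations of $A_4$ (the standard three-dimensional one being $\Ad$), and when $\Gal(K/\Q)=\operatorname{SL}(2,\F_3)$ rather than its quotient $A_4$ one checks in addition that the central element acts trivially on $\Gal(K^{(3)}_{T_v}/K)$, ruling out the natural two-dimensional module. It then suffices to compute the multiplicity of each of $\F_3$ and $\Ad$, and to verify that the module is semisimple with the asserted splitting $\Ad\oplus\F_3\oplus\F_3$.

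For the multiplicity of $\Ad$: via the standard dictionary between $\Ad$-cohomology over $\Q$ and $\F_3$-cohomology over $K$ — which here rests on $H^{0}(\Gal(K/\Q),\Ad)=H^{1}(\Gal(K/\Q),\Ad)=H^{2}(\Gal(K/\Q),\Ad)=0$, a fact that can be checked by restriction to the order-three Sylow subgroup — this multiplicity is $\dim_{\F_3}H^1_{\mathcal L}(G_{S\cup\{v\}},\Ad)$, where the local conditions are $\mathcal N_3=H^1(G_3,\Ad)$ at $3$ (Lemma \ref{3}), the full group $H^1(G_v,\Ad)$ at $v$ (since ramification at $v$ is now permitted), the unramified subspace $\mathcal N_\ell$ at $\ell$ (Lemma \ref{localatl}, as $K^{(3)}_{T_v}$ is unramified at $\ell$), and $0$ at $\infty$ (Lemma \ref{infty}). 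By Wiles' formula relative to the minimal-level, balanced-of-rank-zero setting of Theorem \ref{goodell}, adjoining $v$ with this full condition changes the right-hand side by $\dim H^1(G_v,\Ad)-\dim H^0(G_v,\Ad)=2-1=1$ (Lemma \ref{localranks3}), so $\dim H^1_{\mathcal L}(G_{S\cup\{v\}},\Ad)=1+\dim H^1_{\mathcal L^\perp}(G_{S\cup\{v\}},\Addual)$; and the dual term vanishes because a class in it is trivial at $v$, hence inflated from $H^1_{\mathcal N^\perp}(G_S,\Addual)=0$. So the multiplicity of $\Ad$ is exactly $1$.

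For the multiplicity of $\F_3$: two independent $\Gal(K/\Q)$-trivial quotients are visible at once, namely the base changes to $K$ of the cubic subfield of $\Q(\zeta_9)$ and of the cubic subfield of $\Q(\zeta_v)$ — the second available because $v\equiv1\bmod3$ — which are $\Gal(K/\Q)$-trivial as pullbacks from $\Q$ and independent because they ramify at different places; so the multiplicity is at least $2$. To see it is exactly $2$, compare with the minimal level using the local picture at $v$: the primes of $K$ above $v$ form one $\Gal(K/\Q)$-orbit with stabilizer the order-three decomposition group $D_v$, which acts trivially on the local $\F_3$-cohomology (since $v\equiv1\bmod3$ puts $\mu_3$ in each completion and fixes the relevant classes), so the newly available local classes assemble into the permutation module $\F_3[\Gal(K/\Q)/D_v]\cong\F_3\oplus\Ad$; Lemmas \ref{v}, \ref{N_v} and \ref{remainbalanced3}, together with conditions (\ref{onto}) and (\ref{dependonell}) of Theorem \ref{goodell}, then force this whole module, and nothing more, to be adjoined to the minimal-level module. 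Finally, semisimplicity: the $\Ad$-summand splits off because $\operatorname{Ext}^1_{\F_3[\Gal(K/\Q)]}(\F_3,\Ad)=H^1(\Gal(K/\Q),\Ad)=0$, and the remaining trivial-isotypic part, of $\F_3$-dimension two, is not a non-split self-extension of $\F_3$, since such an extension would cut out a $\Gal(K/\Q)$-extension of $K$ ramified outside $T_v$.

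The step I expect to be the main obstacle is the vanishing of the dual Selmer groups underlying both multiplicity computations — equivalently, the assertion that $\Gal(K^{(3)}_{T_v}/K)$ is no bigger than predicted — which is precisely where the hypotheses of Theorem \ref{goodell} enter: condition (\ref{onto}) controls the adjoint part, while condition (\ref{dependonell}) is needed to exclude adjoint classes already present without ramifying at $v$. Alongside it, some care is required in making the $\F_3$-over-$K$ versus $\Ad$-over-$\Q$ dictionary precise (the inflation–restriction terms, and the translation of ``unramified at $\ell$ over $K$'' into the condition $\mathcal N_\ell$ at $\ell$), and in confirming the semisimplicity of the trivial part rather than a larger pair of multiplicities with internal cancellation.
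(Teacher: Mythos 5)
Your approach is genuinely different from the paper's: the paper simply cites a rank formula for Frattini quotients of $p$-extensions from Koch (global class field theory over $K$), while you translate the computation into Selmer and dual-Selmer groups over $\Q$ and invoke Wiles' formula, then count multiplicities of the two simple $\F_3[\Gal(K/\Q)]$-modules separately. The $\operatorname{Ad}^0(\overline{\rho})$-part of your argument is sound, and in fact can be strengthened: because $\Ad$ restricted to the Sylow $3$-subgroup of $\Gal(K/\Q)$ is the regular representation (a single Jordan block of size $3$), $\Ad$ is projective and injective as an $\F_3[\Gal(K/\Q)]$-module, so its isotypic part always splits off as a direct summand; combined with your Wiles-formula computation $\dim H^1_{\mathcal L}(G_{S\cup\{v\}},\Ad)=1$ (which uses exactly the right local conditions, including the correct observation that $H^1(\Gal(K/\Q),\Ad)=H^2(\Gal(K/\Q),\Ad)=0$ so the inflation--restriction dictionary applies), you get the $\Ad$-summand cleanly. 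This is arguably more illuminating than the paper's appeal to Koch, since it exposes how the local conditions at $\ell$, $3$, $\infty$, and $v$ conspire.

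However, there are two real gaps, both in the trivial-isotypic part. First, the upper bound on the $\F_3$-multiplicity is asserted rather than proved: you say the new local classes at primes above $v$ assemble into $\F_3[\Gal(K/\Q)/D_v]\cong\F_3\oplus\Ad$ and that Lemmas \ref{v}, \ref{N_v}, \ref{remainbalanced3} together with conditions (\ref{onto}) and (\ref{dependonell}) ``force this whole module, and nothing more, to be adjoined to the minimal-level module,'' but no quantitative comparison is made; in particular you never pin down the $\F_3$-multiplicity of $\Gal(K^{(3)}_{\{\text{primes above }3\}}/K)$ (call it $n_0$ as the paper does), and condition (\ref{dependonell}) only says $m_0=0$, not $n_0=1$. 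Without that, the argument only shows the $\F_3$-multiplicity is $\geq 2$ and $\leq n_0+1$. Second, the semisimplicity step for the trivial-isotypic part does not go through as stated: since $\operatorname{Ext}^1_{\F_3[\Gal(K/\Q)]}(\F_3,\F_3)=H^1(\Gal(K/\Q),\F_3)=\operatorname{Hom}(A_4,\F_3)\neq 0$, non-split self-extensions of $\F_3$ genuinely exist, and the claim that such an extension ``would cut out a $\Gal(K/\Q)$-extension of $K$ ramified outside $T_v$'' is not a contradiction — such Galois extensions of $\Q$ are perfectly possible. Note also that the inflation--restriction dictionary you rely on for $\Ad$ does not carry over to the $\F_3$-part precisely because $H^1(\Gal(K/\Q),\F_3)$ and $H^2(\Gal(K/\Q),\F_3)$ are nonzero, so the trivial-isotypic count cannot be reduced to a Selmer group over $\Q$ in the same way; some separate argument over $K$ (which is essentially what Koch's formula supplies) is still needed. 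Finally, in the case $\Gal(K/\Q)=\operatorname{SL}(2,\F_3)$ rather than $A_4$, your assertion that the central element acts trivially — needed to rule out the standard two-dimensional module as a composition factor — is left unjustified.
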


\begin{proof}
    This follows directly from the formula for the 
    rank of $3$-Frattini extensions 
    from global class field theory \cite[Chapter 8]{koch}. 
\end{proof}
Recall that $\Q_S$ denotes the maximal extension of $\Q$ unramified outside $S$. Let $G_S =\Gal(\Q_S/\Q)$.
\begin{lemma} \label{sv}
    Let $v\in C^{(3)}$ and 
let $T_v$ be the primes in $K$ above $v$ and $3$.
 There is a unique number field $K^{(v)}/K$ contained in $K^{(3)}_{T_v}$ such that 
    $
\Gal(K^{(v)}/K) \simeq \Ad
    $
    as $\F_3[\Gal(K/\Q)]$ modules.
    The number field $K^{(v)}$ corresponds to a cohomology class $s^{(v)} \in H^1(G_S, \Ad)$ which is unramified at $\ell$.
    The decomposition group at $v$ in $\Gal(K^{(v)}/K)$ has order 9, and $K^{(v)}/K$ is ramified of degree 3 at primes in $K$ above $v.$
\end{lemma}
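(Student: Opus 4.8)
The plan is to extract the number field $K^{(v)}$ from the $\F_3[\Gal(K/\Q)]$-module decomposition of $\Gal(K^{(3)}_{T_v}/K)$ furnished by Lemma \ref{T_v}, and then track where ramification lives. First I would observe that Lemma \ref{T_v} gives $\Gal(K^{(3)}_{T_v}/K) \simeq \Ad \oplus \F_3 \oplus \F_3$ as $\Gal(K/\Q) \simeq A_4$-modules, and by Lemma 2.8 (the classification of absolutely irreducible $A_4$-representations over $\F_3$) the isotypic component for the standard/adjoint representation is exactly one copy of $\Ad$; since $\Ad$ is irreducible, its fixed field $K^{(v)}$ is the unique subextension of $K^{(3)}_{T_v}/K$ that is isomorphic to $\Ad$ as a $\Gal(K/\Q)$-module, and it is automatically Galois over $\Q$ because $\Ad$ is a characteristic submodule of the socle decomposition. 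This gives existence and uniqueness of $K^{(v)}$ and the asserted module structure.

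Next I would produce the cohomology class $s^{(v)}$. Because $K^{(v)}/K$ is an elementary abelian $3$-extension on which $\Gal(K/\Q)$ acts through $\Ad$, inflation-restriction applied to the tower $\Q \subset K \subset K^{(v)}$ realizes this data as a class in $H^1(G_{S'}, \Ad)$ for $S' = S \cup \{v\} = \{3,\ell,v\}$ — here one uses that $H^0(\Gal(K/\Q),\Ad)=0$ and $H^1(\Gal(K/\Q),\Ad)$ is controlled (the image containing $\operatorname{SL}(2,\F_3)$ kills the relevant inflation terms, cf. the vanishing of $H^i(\Gal(K/\Q),\Ad)$ used throughout), so that the restriction map $H^1(G_{S'},\Ad) \to H^1(G_{K,S'},\Ad)^{\Gal(K/\Q)}$ is an isomorphism onto the $\Ad$-isotypic line. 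To see that $s^{(v)}$ is unramified at $\ell$: by construction $K^{(3)}_{T_v}$ is unramified outside $T_v$, the primes of $K$ above $v$ and $3$; hence $K^{(v)}/K$, and therefore $s^{(v)}$, is unramified at every prime above $\ell$. Restricting $s^{(v)}$ to $G_\ell$ lands in $H^1_{\mathrm{unr}}(G_\ell,\Ad)$, which is the line $\mathcal{N}_\ell$ of Lemma \ref{localatl}, so $s^{(v)} \in H^1(G_S,\Ad)$ in the stated sense (with $S=\{3,\ell\}$ the set of ramification outside $v$).

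Finally, for the local statement at $v$: the decomposition group of a prime $w \mid v$ of $K$ inside $\Gal(K^{(3)}_{T_v}/K)$ has order dividing $9$ by Lemma \ref{localranks3} and the local structure at $v\in C^{(3)}$ — the maximal $3$-elementary abelian extension of the completion $K_w$ that can be cut out locally has degree $9$ over $K_w$ (one tame piece of degree $3$ from $v\equiv 1\bmod 3$, one piece of degree $3$ from the residue extension), so the image in $\Ad$ of this decomposition group has order at most $9$. The hard part will be pinning the order down to exactly $9$ and identifying the inertia piece as degree $3$: this requires checking that the adjoint-isotypic class $s^{(v)}$ genuinely \emph{uses} ramification at $v$ — equivalently, that the $\Ad$ summand does not already appear in $\Gal(K^{(3)}_{\mathfrak T}/K)$ for $\mathfrak T = \{3\}$ — which is precisely where condition (\ref{dependonell}) of Theorem \ref{goodell} enters (it forces $m_0 = 0$, so every adjoint class in the $T_v$-ramified Frattini extension must be ramified at $v$). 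Given that, local class field theory at $v$ — the decomposition of $K_w^\times \otimes \F_3$ into its unit part (giving tame inertia, degree $3$ since $v\equiv 1\bmod 3$) and uniformizer part (giving the unramified degree-$3$ piece) — shows the decomposition group has order $9$ with inertia of order $3$, so $K^{(v)}/K$ is ramified of degree $3$ at the primes above $v$, as claimed.
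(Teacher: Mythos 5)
For the existence and uniqueness of $K^{(v)}$ and for the unramifiedness of $s^{(v)}$ at $\ell$, your argument matches the paper's terse proof: isolate the unique $\Ad$-isotypic piece of $\Gal(K^{(3)}_{T_v}/K)$ supplied by Lemma \ref{T_v}, and note that $T_v$ omits the primes above $\ell$. You are also right to read $s^{(v)}$ as a class in $H^1(G_{S\cup\{v\}},\Ad)$ rather than $H^1(G_S,\Ad)$ as written, with ``unramified at $\ell$'' meaning a condition on $s^{(v)}|_{G_\ell}$. Your use of condition (\ref{dependonell}) of Theorem \ref{goodell} (that $m_0=0$) to force the inertia at $v$ to be nontrivial is exactly the right observation, and it is not made explicit in the paper's own (very short) proof.

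The final step, however, does not actually establish $|D_w|=9$. From $K_w^\times\otimes\F_3\simeq(\Z/3\Z)^2$ you get $|D_w|\leq 9$, and from $m_0=0$ you get $|I_w|=3$; since both $I_w$ and $D_w$ are stable under the order-$3$ Frobenius of $K/\Q$, which acts on $\Ad$ by a single $3\times 3$ Jordan block, both must lie in the unique invariant flag, so $D_w\in\{\F_3,\F_3^2\}$. To conclude $|D_w|=9$ you must rule out $D_w=I_w$, i.e.\ show the extension at $w$ is not totally ramified, equivalently that Frobenius acts nontrivially on $\Gal(K^{(v)}/K)/I_w$. The fact that the maximal abelian $3$-elementary extension of $K_w$ has degree $9$ does not by itself give this: a global abelian extension may realize only a proper quotient of the local group, and your proposal simply asserts that the full local group is realized. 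Some additional input is needed here, for instance from the rank computation behind Lemma \ref{T_v} or a direct ray-class-group argument. (To be fair, the paper's own proof does not address this last sentence of the lemma at all; the gap is shared.)
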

\begin{proof}
   The existence of $K^{(v)}$ follows by Lemma \ref{T_v}. The cohomology class $s^{(v)}$ is unramified at $\ell$ because $T_v$ does not contain the primes in $K$ above $\ell$.
\end{proof}

\begin{lemma} \label{nonsplit}
Let 
$\ell \equiv 1 \mod 3$ with 4 dividing the class number of the cubic subfield of $\Q(\zeta_\ell),$
and let $K=\Q(\overline{\rho})$ be the corresponding
$A_4$ extension.  
Let $S=\{\infty, 3, \ell \}.$
If the primes above $v$ in $K$ have Frobenius automorphisms in $K^{(3)}_{T_v}$ of order 3, then $s^{(v)}|_{G_v}$ is nonsplit, 
    $s^{(v)}|_{G_v} \notin \mathcal{N}_v,$ and the global setting is balanced of rank zero after allowing ramification at $v,$ i.e.  
       $$
H^1_{\mathcal{N}} (G_{S\cup\{v\}}, \operatorname{Ad}^0(\overline{\rho}^{(\ell)}))
=
H^1_{\mathcal{N}^\perp} (G_{S\cup\{v\}}, \operatorname{Ad}^0(\overline{\rho}^{(\ell)})^*)
= 0.$$
\end{lemma}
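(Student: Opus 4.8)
The plan is to prove the three assertions of Lemma \ref{nonsplit} in sequence, each feeding into the next. First, assume the primes above $v$ in $K$ have Frobenius of order $3$ in $K^{(3)}_{T_v}$. By Lemma \ref{sv}, the class $s^{(v)} \in H^1(G_S, \operatorname{Ad}^0(\overline{\rho}))$ cuts out $K^{(v)}/K$ with $\Gal(K^{(v)}/K) \simeq \operatorname{Ad}^0(\overline{\rho})$, and the decomposition group at $v$ has order $9$ while the inertia is of order $3$. I would translate the hypothesis on Frobenius into a statement about the local extension $E_{s^{(v)}|_{G_v}}/\Q_v$ from Definition \ref{split}: the order-$9$ decomposition group being generated (modulo inertia) by a Frobenius of order $3$ means that the exact sequence
$$
1 \to \Gal(E_{s^{(v)}|_{G_v}}/\Q_v(\overline{\rho}|_{G_v})) \to \Gal(E_{s^{(v)}|_{G_v}}/\Q_v) \to \Gal(\Q_v(\overline{\rho}|_{G_v})/\Q_v) \to 1
$$
is \emph{non}split — a splitting would force a Frobenius element of order dividing $3$ lifting the order-$3$ quotient generator with trivial image in the kernel, hence an unramified degree-$3$ subextension incompatible with the order-$9$ cyclic (or at least non-split) decomposition group. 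This gives the first claim: $s^{(v)}|_{G_v}$ is nonsplit.

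Second, by Lemma \ref{N_v}, $\mathcal{N}_v \subseteq H^1(G_v, \operatorname{Ad}^0(\overline{\rho}))$ is exactly the line spanned by a split class, while $H^1(G_v,\operatorname{Ad}^0(\overline{\rho}))$ is two-dimensional, spanned by one split and one nonsplit class. Since $s^{(v)}|_{G_v}$ is nonsplit and nonzero (it is nonzero because the decomposition group at $v$ is genuinely order $9$, so the restriction is ramified, hence not the zero class), it cannot lie in the split line $\mathcal{N}_v$. This yields $s^{(v)}|_{G_v} \notin \mathcal{N}_v$.

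Third, for the balancedness-of-rank-zero claim, I would argue as follows. By Theorem \ref{goodell} the global setting at $S = \{3,\infty,\ell\}$ is balanced of rank zero, so $H^1_{\mathcal{N}}(G_S,\operatorname{Ad}^0(\overline{\rho})) = H^1_{\mathcal{N}^\perp}(G_S,\operatorname{Ad}^0(\overline{\rho})^*) = 0$. By Lemma \ref{remainbalanced3}, after allowing ramification at $v \in C^{(3)}$ the setting remains balanced, so it suffices to show one of the two (now equal-dimensional) Selmer groups vanishes. The class $s^{(v)}$ lies in $H^1(G_S,\operatorname{Ad}^0(\overline{\rho}))$, is unramified at $\ell$ (Lemma \ref{sv}), trivial at $\infty$ and satisfies the local condition at $3$ since $\mathcal{N}_3$ is everything (Lemma \ref{3}); but $s^{(v)}|_{G_v} \notin \mathcal{N}_v$, so $s^{(v)}$ is \emph{not} in $H^1_{\mathcal{N}}(G_{S\cup\{v\}},\operatorname{Ad}^0(\overline{\rho}))$. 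Meanwhile $s^{(v)}$ generates the kernel of the restriction map $H^1(G_S,\operatorname{Ad}^0(\overline{\rho})) \to H^1(G_v,\operatorname{Ad}^0(\overline{\rho}))/\mathcal{N}_v$ relative to $H^1_{\mathcal{N}}(G_S,\cdot) = 0$ — i.e. enlarging $S$ to $S \cup \{v\}$ with the unramified-style condition $\mathcal{N}_v$ adds the class $s^{(v)}$ to the "potential" Selmer group but the non-split restriction removes it, so $H^1_{\mathcal{N}}(G_{S\cup\{v\}},\operatorname{Ad}^0(\overline{\rho})) = 0$, and by balancedness the dual Selmer group vanishes too, giving the displayed equation.

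The main obstacle is the first step: carefully matching the arithmetic hypothesis "Frobenius of order $3$ in $K^{(3)}_{T_v}/\Q$ above $v$" with the cohomological notion of split versus nonsplit from Definition \ref{split}. This requires tracking how the decomposition group at $v$ sits inside $\Gal(K^{(v)}/\Q) \simeq \operatorname{Ad}^0(\overline{\rho}) \rtimes A_4$ and identifying that an order-$9$ decomposition group with order-$3$ inertia is precisely the non-split local condition — an argument in local class field theory combined with the explicit description of $\overline{\rho}|_{G_v}$ (unipotent $\sigma_v$, trivial $\tau_v$) given before Lemma \ref{localranks3}. Once that dictionary is in place, steps two and three are essentially bookkeeping with Wiles' formula and the already-established balancedness lemmas.
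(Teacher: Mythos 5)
Your proposal follows the paper's proof for the second and third claims, but diverges at the first claim, where the paper simply cites \cite{even2} Lemma~6 for the implication ``Frobenius of order $3$ in $K^{(3)}_{T_v}$ $\Rightarrow$ $s^{(v)}|_{G_v}$ nonsplit.'' You instead sketch a direct proof, and that sketch has a genuine gap. The hypothesis concerns the Frobenius of a prime of $K$ above $v$ inside the elementary abelian group $\Gal(K^{(3)}_{T_v}/K) \simeq \operatorname{Ad}^0(\overline{\rho}) \oplus \F_3^2$, while the split/nonsplit dichotomy of Definition~\ref{split} concerns the extension $E_{s^{(v)}|_{G_v}}/\Q_v$, a degree-$9$ extension of $\Q_v$ whose Galois group contains the image of the decomposition group inside $\Gal(K^{(v)}/\Q) \simeq \operatorname{Ad}^0(\overline{\rho}) \rtimes A_4$, \emph{not} merely inside $\Gal(K^{(v)}/K)$. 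Your argument conflates these: the ``order-$9$ decomposition group'' in Lemma~\ref{sv} sits inside the elementary abelian $3$-group $\Gal(K^{(v)}/K)$, where every nontrivial element has order $3$, so ``Frobenius of order $3$'' carries no information there; the order-$9$ group that can be either $\Z/9$ or $(\Z/3)^2$ is the quotient $\Gal(E_{s^{(v)}|_{G_v}}/\Q_v)$, and you never verify which quotient of the global decomposition group this is or how the Frobenius hypothesis over $K$ constrains it. A careful version would have to trace the Frobenius over $K$ through the semidirect-product structure over $\Q$ and the local degree count, which is precisely what \cite{even2} Lemma~6 does.

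On the other hand, your treatment of the third claim is actually more complete than the paper's written proof. The paper invokes Lemma~\ref{remainbalanced3}, which only asserts that the setting \emph{remains balanced} (equal Selmer and dual-Selmer dimensions), not that the common dimension stays zero. You correctly supply the missing step: the class $s^{(v)}$ is the unique candidate (up to scalar) for a nonzero element of $H^1_{\mathcal{N}}(G_{S\cup\{v\}}, \operatorname{Ad}^0(\overline{\rho}))$ — it satisfies the local conditions at $\ell$, $3$, and $\infty$ — and it is excluded by $s^{(v)}|_{G_v} \notin \mathcal{N}_v$, whence the Selmer group and, by balancedness, the dual Selmer group both vanish. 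The second claim (nonsplit $\iff$ $\notin \mathcal{N}_v$ via Lemma~\ref{N_v}) matches the paper exactly. So: fix the first step by either citing \cite{even2} Lemma~6 as the paper does, or by spelling out the identification of $\Gal(E_{s^{(v)}|_{G_v}}/\Q_v)$ with the correct decomposition subgroup of $\operatorname{Ad}^0(\overline{\rho}) \rtimes A_4$ and showing rigorously why order-$3$ Frobenius over $K$ forces a nonsplit sequence over $\Q_v$.
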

\begin{proof}
The proof of 
By Lemma \ref{N_v}, $s^{(v)}|_{G_v}$ is nonsplit if and only if $s^{(v)}|_{G_v} \notin \mathcal{N}_v$, and in this case, 
    $s^{(v)} \notin H^1_{\mathcal{N}} (G_{S\cup\{v\}}, \operatorname{Ad}^0(\overline{\rho})).$
    The global setting is balanced of rank zero at $S$, and by Lemma \ref{remainbalanced3} the global setting remains balanced of rank zero after allowing ramification at $v$.
\end{proof}

\begin{definition}[Level raising at $v$] \label{deflev}
    Suppose the global setting is balanced of rank zero at $S$. Then there is a unique even 3-adic representation
    $$
\rho: G_S \longrightarrow \operatorname{SL}(2,\Z_3)
    $$
    such that $\rho|_{G_q} \in \mathcal{C}_q$ for all
    $q\in S.$
    If the global setting remains balanced of rank zero after allowing ramification at $v$, we say that level raising at $v$ is possible for $\rho,$ or that $v$ raises the level of $\rho$ in the sense of Galois cohomology. 
    In this case, there is a unique even 3-adic representation 
    $$ 
   \rho^{(v)}:
    G_{S\cup \{v\}} \longrightarrow \operatorname{SL}(2,\Z_3).
    $$
    such that $\rho^{(v)} \equiv \overline{\rho} \mod 3$ and
     $\rho^{(v)}|_{G_q} \in \mathcal{C}_q$ 
     for all $q\in S\cup \{v \}.$
\end{definition}

\subsection{Ramification at the auxiliary prime}
\begin{proposition} \label{ram}
Let $\rho^{(\ell)}$ be the surjective even representation onto $\operatorname{SL}(2,\Z_3)$
lifting $\overline{\rho}^{(\ell)}$.
Let $v\in C^{(3)}$ be an auxiliary prime raising the level of $\rho^{(\ell)}$ according to Definition \ref{deflev}.
For ease of notation, let $\rho^{(\ell, v)}$ denote the new $3$-adic even representation whose level has been raised to $S \cup \{ v\}.$
If $\rho^{(\ell, v)}$ is unramified at $v$, then 
$\rho^{(\ell, v)}$ factors through $G_S,$ and 
$$ \rho^{(\ell,v)} = \rho^{(\ell)}.$$
 Among the set of primes $v$ raising the level of $\rho^{(\ell)}$,
 the subset of primes $v$
 such that $\rho^{(\ell, v)}$ is unramified at $v$ has   density zero. 
\end{proposition}

\begin{proof}
Since $\rho^{(\ell,v)}|_{G_v} \in \mathcal{C}_v,$
there are $x$ and $y$ in $\Z_3$ such that 
$$
\rho^{(\ell,v)}|_{G_v}: 
\sigma_v \mapsto 
\begin{pmatrix}
\sqrt{v} & 1 + 3x \\
0 & \sqrt{v}^{-1}
\end{pmatrix},
\quad 
 \tau_v \mapsto 
\begin{pmatrix}
1 & 3y \\
0 & 1
\end{pmatrix}.
$$
We see that $\rho^{(\ell, v)}|_{G_v} $ is unramified at $v$ if and only if $y=0,$
which happens with probability zero. 
More precisely, let $\rho_n = \rho^{(\ell, v)} \mod 3^n$ for $n\geq 1$: 
    $$
\rho_n: G_S \longrightarrow \operatorname{SL}(2, \Z /3^n \Z).
    $$
    Since $\rho_n|_{G_v} \in \mathcal{C}_v$, 
$$
\rho_n|_{G_v}: 
\sigma_v \mapsto 
\begin{pmatrix}
\sqrt{v} & 1 + 3x_n \\
0 & \sqrt{v}^{-1}
\end{pmatrix},
\quad 
 \tau_v \mapsto 
\begin{pmatrix}
1 & 3y_n \\
0 & 1
\end{pmatrix}
$$
for some $x_n, y_n \in \Z/3^n\Z,$ and $\rho_n$ is unramified at $v$ if and only if $y_n=0$ 
in $\Z/3^n\Z,$
which happens with probability $1/3^n.$
Then $\rho^{(\ell,v)}$ is unramified at $v$ if and only if $y_n = 0$ for all $n$, which happens with probability zero. 
\end{proof}

\subsection{Descent}
 
Let $v \in C^{(3)}$. Let $F$ be the subfield of $K$ fixed by a subgroup 
of order 3 in $A_4$.
   If $\mathcal{O}_F$ is the ring of integers in $F$, then
   $$
    v \mathcal{O}_F = \mathfrak{v}_1\mathfrak{v}_2, \quad 3\mathcal{O}_F= 3_1 3_2, \quad \ell \mathcal{O}_F = \ell_1 \ell_2^3.
   $$
   where the norm of $\mathfrak{v}_1$ is $v^3$, the norm of $\mathfrak{v}_2$ is $v$, the norm of $3_1$ is $27$ and the norm of $3_2$ is 3. 

\begin{lemma}[Sufficient condition for level raising at $v$]
Let $C^{(\lambda)}$ be the subset of $v\in C^{(3)}$ such that
the prime ideal $\mathfrak{v}_1$ in $F$ of norm $v^3$ remains prime in the 3-Frattini extension 
$F_{\{  3_1, \mathfrak{v}_2 \} }^{(3)}/F$ unramified outside  $3_1$ and $\mathfrak{v}_2,$
i.e. 
 $$
C^{(\lambda)}=\{ v\in C^{(3)}: f( \mathfrak{v}_1, F_{\{  3_1, \mathfrak{v}_2 \} }^{(3)}/F)= 3 \}. 
$$ 
If $v\in C^{(\lambda)}$, then the global setting remains balanced of rank zero after allowing ramification at $v,$ i.e. 
  $$
\dim H^1_{\mathcal{N}} (G_{S\cup\{v\}}, \operatorname{Ad}^0(\overline{\rho}))
=\dim H^1_{\mathcal{N}^\perp} (G_{S\cup\{v\}}, \operatorname{Ad}^0(\overline{\rho})^*)=0.$$
\end{lemma}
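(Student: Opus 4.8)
The plan is to reduce the statement to Lemma~\ref{nonsplit}. In the running situation $\overline{\rho}$ arises from a prime $\ell$ satisfying Theorem~\ref{goodell}, so that lemma is available: once we know that the primes of $K$ above $v$ have Frobenius of order $3$ in $K^{(3)}_{T_v}/K$, Lemma~\ref{nonsplit} yields precisely the asserted statement, namely that the global setting is balanced of rank zero after allowing ramification at $v$. Hence the whole content is to show that $v\in C^{(\Lambda)}$ forces this Frobenius condition, and I would prove this by descending the splitting condition from $K$ to the quartic field $F$.

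First I would fix the geometry. The extension $K/F$ is Galois (it is intermediate in $K/\Q$) and is ramified only at the primes above $\ell$. By the same global class field theory computation that gives $\Gal(F^{(3)}_{\{3_1,\ell_2\}}/F)\simeq\Z/3\Z$ in the preceding lemma, the extension $M:=F^{(3)}_{\{3_1,\mathfrak{v}_2\}}/F$ is elementary $3$-abelian and unramified outside $\{3_1,\mathfrak{v}_2\}$; in particular it is unramified at $\mathfrak{v}_1$. Since $K/F$ is ramified at the primes above $\ell$ whereas $M/F$ is not, $K\not\subseteq M$, hence $M\cap K=F$. Consequently the restriction map $\Gal(MK/K)\to\Gal(M/F)$ is an isomorphism, and $MK/K$ is elementary $3$-abelian and unramified outside the primes of $K$ above $3_1$ and $\mathfrak{v}_2$, hence above $3$ and $v$; by maximality $MK\subseteq K^{(3)}_{T_v}$, so that $\Gal(MK/K)$ is a quotient of $\Gal(K^{(3)}_{T_v}/K)$.

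Next I would translate the hypothesis $v\in C^{(\Lambda)}$. Because $v\in C^{(3)}$ and $K/\Q$ is Galois, every prime of $K$ above $v$ has residue degree $3$ over $\Q$; since $\mathfrak{v}_1$ has norm $v^3$, a prime $\mathfrak{w}$ of $K$ above $\mathfrak{v}_1$ therefore satisfies $f(\mathfrak{v}_1,K/F)=1$. A standard comparison of Frobenius elements in the tower $F\subseteq K\subseteq MK$, with $\mathfrak{v}_1$ unramified in $MK/F$, then shows that under the isomorphism $\Gal(MK/K)\simeq\Gal(M/F)$ the Frobenius of $\mathfrak{w}$ corresponds to $\operatorname{Frob}_{\mathfrak{v}_1}$ raised to the power $f(\mathfrak{v}_1,K/F)=1$, i.e.\ to $\operatorname{Frob}_{\mathfrak{v}_1}$ itself. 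Hence $v\in C^{(\Lambda)}$, that is $f(\mathfrak{v}_1,M/F)=3$, means $\operatorname{Frob}_{\mathfrak{v}_1}$ has order $3$ in $\Gal(M/F)$, and therefore $\operatorname{Frob}_{\mathfrak{w}}$ has order $3$ in the quotient $\Gal(MK/K)$ of $\Gal(K^{(3)}_{T_v}/K)$; as the latter group has exponent $3$, the Frobenius of $\mathfrak{w}$ in $K^{(3)}_{T_v}/K$ has order exactly $3$. Finally, $K/\Q$ being Galois makes all primes of $K$ above $v$ conjugate, and $\Gal(K^{(3)}_{T_v}/K)$ is normal in $\Gal(K^{(3)}_{T_v}/\Q)$, so the Frobenius elements at all of them are conjugate and hence all of order $3$. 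This is exactly the hypothesis of Lemma~\ref{nonsplit}, and applying it concludes the proof.

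I expect the main obstacle to be the descent just outlined: verifying that the condition on $K$ is genuinely governed by the one small extension $M/F$. Concretely, one must supply the class field theory input — a units-and-class-group computation in the quartic field $F$ showing that $F^{(3)}_{\{3_1,\mathfrak{v}_2\}}/F$ is the expected elementary abelian extension, exactly parallel to the $\{3_1,\ell_2\}$ case — and one must carry out the Frobenius and residue-degree bookkeeping by hand, since $F/\Q$ is not Galois and the conclusion cannot simply be read off the $\F_3[\Gal(K/\Q)]$-module structure of $\Gal(K^{(3)}_{T_v}/K)$. The arithmetic fact that makes the descent work is precisely that a prime of $K$ above $\mathfrak{v}_1$ has residue degree $1$ over $\mathfrak{v}_1$, which is the defining feature of membership in $C^{(3)}$.
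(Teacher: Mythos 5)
Your proposal is correct and routes through Lemma~\ref{nonsplit} exactly as the paper does, but where the paper's proof is a one-line citation---``In \cite{even2}, it is shown that if $v\in C^{(\Lambda)}$, then the cohomology class $s^{(v)}$ \dots is nonsplit''---you supply a self-contained descent argument reducing the splitting condition on $\mathfrak{v}_1$ in $F^{(3)}_{\{3_1,\mathfrak{v}_2\}}/F$ to the Frobenius-order-3 hypothesis of Lemma~\ref{nonsplit}. The bookkeeping is right: $K/F$ is Galois of prime degree $3$ and ramified at $\ell_1 \notin \{3_1,\mathfrak{v}_2\}$, so $M\cap K=F$ and restriction gives $\Gal(MK/K)\simeq\Gal(M/F)$; the defining feature of $C^{(3)}$ makes $f(\mathfrak{w}/\mathfrak{v}_1)=1$ so Frobenius restricts without taking a power; $MK\subseteq K^{(3)}_{T_v}$ by maximality; exponent $3$ pins the order; and normality of $\Gal(K^{(3)}_{T_v}/K)$ in $\Gal(K^{(3)}_{T_v}/\Q)$ transports the conclusion to the (ramified) prime above $\mathfrak{v}_2$ by conjugation. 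What the student's route buys is transparency: it exhibits the implication $C^{(\Lambda)}\Rightarrow$ (Frobenius condition) as pure Galois-theoretic/Frobenius bookkeeping, whereas the paper simply outsources it. One small correction to your closing remarks: you do \emph{not} need a units-and-class-group computation in $F$ to pin down $\Gal(F^{(3)}_{\{3_1,\mathfrak{v}_2\}}/F)$; the hypothesis $v\in C^{(\Lambda)}$ already asserts $f(\mathfrak{v}_1,M/F)=3$, which is all the argument uses, so the ``main obstacle'' you anticipate is not actually present.
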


\begin{proof}
   If $v\in C^{(\lambda)},$ then the cohomology class $s^{(v)}$ defined in Lemma \ref{sv} is nonsplit, and by Lemma \ref{nonsplit}, this implies that the global setting remains balanced of rank zero after allowing ramification at $v$. 
\end{proof}

A set of primes $C$ is said to be a \emph{Chebotarev set} 
if $C$ is the set of primes satisfying a finite list of prescribed splitting conditions in a fixed number field $E$ independent of any of the primes in $C$. 

Note that the abelian extension $F_{\{  3_1, \mathfrak{v}_2 \} }^{(3)}/F$ depends on the prime $v$, and therefore $C^{(\lambda)}$ is most likely \emph{not} a Chebotarev set, and the density of $C^{(\lambda)}$ in $C^{(3)}$ cannot be found by Chebotarev's theorem. 
It is open whether $C^{(\lambda)}$ is an infinite set. 

\begin{question} \label{2/3}
Let 
$S=\{  \infty, 3, \ell \}$ and consider a surjective, even representation
  $$
  \rho^{(\ell)}: 
\Gal(\Q_S/\Q)
  \to \operatorname{SL(2,\Z_3})
$$
unramified away from $3$ and $\ell$.
Is the density of $C^{(\lambda)}$ in $C^{(3)}$  
2/3? The data in Tables \ref{table:ell=163}, \ref{table:ell=277} and \ref{table:ell=349} suggest so. 
\end{question}

\begin{corollary} Suppose Question \ref{2/3} has an affirmative answer.  
Let $R_{(\mathcal{N}_q)_{q\in S}}$ be the global even deformation ring parametrizing deformations $\rho$ of
$
\overline{\rho}
$ such that $\rho|_{G_q} \in \mathcal{C}_q$ for all $q\in S.$
If the global setting is balanced of rank zero at $S$, then 
$
R_{(\mathcal{N}_q)_{q\in S}} \simeq \Z_3.
$
The density of auxiliary primes $v \in C^{(3)}$ 
such that 
$$
R_{\mathcal{N}_{q \in S\cup \{v\} }} \simeq \Z_3
$$
is 2/3. 
\end{corollary}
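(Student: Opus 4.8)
The plan is to feed the deformation-theoretic dictionary built in the previous subsections into the density statement assumed in Question \ref{2/3}. For the first assertion the argument is the one already used in the proof of Theorem \ref{goodell}. If the global setting is balanced of rank zero at the minimal level, then the relative cotangent space $\mathfrak{m}_R/(\mathfrak{m}_R^2+3R)$ of $R:=R_{(\mathcal{N}_q)_{q\in S}}$ over $\Z_3$, which is dual to the tangent space $H^1_{\mathcal{N}}(G_S,\Ad)$, vanishes; hence $\mathfrak{m}_R/3R$ equals its own square and, by Nakayama's lemma applied to the complete local Noetherian ring $R/3R$, $\mathfrak{m}_R=3R$, so $R$ is a quotient of $\Z_3$, i.e. $R\simeq\Z_3$ or $R\simeq\Z/3^n\Z$. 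The surjective, even lift $\rho_{\mathcal{N}}$ posited in the setup of Question \ref{2/3} satisfies $\rho_{\mathcal{N}}|_{G_q}\in\mathcal{C}_q$ for all $q\in S$, hence is a $\Z_3$-point of $R$, which excludes the finite quotients and forces $R\simeq\Z_3$.

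For the density statement I would first observe that exactly the same reasoning applies after enlarging $S$ to $S\cup\{v\}$ for $v\in C^{(3)}$. By Lemma \ref{remainbalanced3} the global setting stays balanced once the Selmer condition $\mathcal{N}_v$ is imposed at $v$, so $R_{\mathcal{N}_{q\in S\cup\{v\}}}$ is a quotient of $\Z_3$ precisely when $\dim H^1_{\mathcal{N}}(G_{S\cup\{v\}},\Ad)=0$; and in that case the representation $\rho_{\mathcal{N}}^{(v)}$ of the level-raising definition supplies a $\Z_3$-point, so $R_{\mathcal{N}_{q\in S\cup\{v\}}}\simeq\Z_3$. If instead that rank is $\geq 1$, the relative cotangent space is nonzero and $R_{\mathcal{N}_{q\in S\cup\{v\}}}\not\simeq\Z_3$. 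Therefore the set of $v\in C^{(3)}$ with $R_{\mathcal{N}_{q\in S\cup\{v\}}}\simeq\Z_3$ is exactly the set of primes that raise the level of $\rho_{\mathcal{N}}$ in the sense of Galois cohomology.

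It then remains to identify this level-raising set with $C^{(\Lambda)}$, up to a set of density zero, for then the assumed affirmative answer to Question \ref{2/3} immediately gives density $2/3$. One containment is the sufficient condition for level raising already established: $v\in C^{(\Lambda)}$ forces the global setting to remain balanced of rank zero after ramification at $v$. For the reverse containment I would argue that when the Frattini-splitting condition defining $C^{(\Lambda)}$ fails, the class $s^{(v)}$ of Lemma \ref{sv} restricts at $v$ to a split class, so $s^{(v)}|_{G_v}\in\mathcal{N}_v$ by Lemma \ref{N_v}; since $s^{(v)}$ is unramified at $\ell$, automatically meets the condition at $3$ (where $\mathcal{N}_3$ is all of $H^1$) and is trivial at $\infty$, it is then a nonzero element of $H^1_{\mathcal{N}}(G_{S\cup\{v\}},\Ad)$ — nonzero because $K^{(v)}\supsetneq K$ — so the rank is $\geq 1$ and $v$ does not raise the level. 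Hence the two sets coincide and the density follows.

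The main obstacle is the reverse containment in the previous paragraph: one must verify that the Frattini-splitting condition cutting out $C^{(\Lambda)}$ is not merely sufficient for non-splitness of $s^{(v)}|_{G_v}$ but actually equivalent to it, which rests on the class field theory computing $\Gal(F^{(3)}_{\{3_1,\mathfrak{v}_2\}}/F)$ together with the local analysis of the pair $(\mathcal{N}_v,\mathcal{C}_v)$ in Lemmas \ref{v}, \ref{N_v} and \ref{nonsplit}. Everything else — the cotangent-space computation and the final counting — is routine or has already been assumed in Question \ref{2/3}.
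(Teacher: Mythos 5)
The paper gives no proof of this corollary, so there is no authorial argument to compare against; your reduction is the natural one and almost certainly what is intended. Your first two steps are correct and match the paper's own methods: balanced of rank zero means the relative cotangent space $\mathfrak{m}_R/(\mathfrak{m}_R^2+3R)\cong H^1_{\mathcal{N}}(G_S,\Ad)^\vee$ vanishes, Nakayama gives $R$ a quotient of $\Z_3$, and the $\Z_3$-point $\rho_{\mathcal{N}}$ rules out the finite quotients — this is exactly the argument in the proof of Theorem \ref{goodell}. Likewise, after adjoining $v$, nonvanishing of $H^1_{\mathcal{N}}(G_{S\cup\{v\}},\Ad)$ forces a nonzero relative cotangent space, hence $R_{\mathcal{N}_{q\in S\cup\{v\}}}\not\simeq\Z_3$; and since by Lemma \ref{kernel} the kernel of restriction to $\bigoplus_{q\in S}H^1(G_q,\Ad)/\mathcal{N}_q$ is the line $\langle s^{(v)}\rangle$, one sees $H^1_{\mathcal{N}}(G_{S\cup\{v\}},\Ad)=0$ precisely when $s^{(v)}|_{G_v}\notin\mathcal{N}_v$, i.e.\ precisely when $v$ raises the level.

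You have also put your finger on the real gap: the corollary, as stated, silently needs the set $\{v: s^{(v)}|_{G_v}\ \text{nonsplit}\}$ to agree with $C^{(\Lambda)}$ (at least up to density zero), because Question \ref{2/3} is literally about $C^{(\Lambda)}$. The paper only establishes the containment $C^{(\Lambda)}\subseteq\{v:\ \text{level-raising}\}$ — the relevant lemma is explicitly labeled a \emph{sufficient} condition, and its proof cites \cite{even2} for that direction only — so an affirmative answer to Question \ref{2/3} by itself only yields a lower bound of $2/3$ for the density of $\{v: R_{\mathcal{N}_{q\in S\cup\{v\}}}\simeq\Z_3\}$. Closing this requires the converse, that failure of the Frattini condition on $\mathfrak{v}_1$ in $F^{(3)}_{\{3_1,\mathfrak{v}_2\}}/F$ forces $s^{(v)}|_{G_v}$ to be split; this is presumably what Lemma 6 of \cite{even2} actually shows as a biconditional, but it is nowhere stated or proved in the present paper. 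Your diagnosis of this as the main obstacle is accurate, and your sketch of the remaining argument (class field theory for $\Gal(F^{(3)}_{\{3_1,\mathfrak{v}_2\}}/F)$ plus Lemma \ref{N_v}) is the right place to look; just be aware that until that equivalence is supplied, neither your proof nor the paper's corollary is fully closed.
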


\begin{lemma}[Sufficient condition for ramification at $v$]
    Let $C^{(\tau)}$ be the set of $v\in C^{(3)}$ such that 
the prime ideal $\mathfrak{v}_1$ in $F$ of norm $v^3$ splits completely in the 3-Frattini extension 
        $F_{\{  3_1, \ell_2 \} }^{(3)}/F$ unramified outside $3_1$ and $\ell_2$, i.e.
    $$
C^{(\tau)}=\{ v\in C^{(3)}: f( \mathfrak{v}_1, F_{\{  3_1, \ell_2 \} }^{(3)}/F)= 1  \}. 
$$
If $v\in C^{(\tau)}\cap C^{(\lambda)}$, then $\rho^{(\ell,v)}$ is ramified at $v$ mod 9. 
The density of $C^{(\tau)}$ in $C^{(3)}$ is
$$
\lim_{n\to \infty} \frac{|C^{(\tau)}\cap \{ v \leqslant n\}|}{|C^{(3)}\cap \{ v\leqslant n\}|} =1/3.
$$
\end{lemma}

\begin{proof}
It is easy to verify that if $v \in C^{(\tau)}\cap C^{(\lambda)},$ then 
$\rho^{(\ell,v)}$ is ramified at $v$ mod 9
(see also \cite{even2}). 
By global class field theory, 
$$
\Gal(F_{\{  3_1, \ell_2 \} }^{(3)}/F) \simeq \Z/3\Z.
$$
By Chebotarev's theorem, the density of primes $\mathfrak{v}$ in $F$ such that 
$f( \mathfrak{v} , F_{\{  3_1, \ell_2 \} }^{(3)}/F)= 1$ is equal to 1/3.
\end{proof}

\begin{question}[Independence]
Are the conditions defining $C^{(\lambda)}$ and $C^{(\tau)}$ independent? 
If so, the density 
of $C^{(\lambda)} \cap C^{(\tau)}$ in $C^{(3)}$  is $(1/3)(2/3)=2/9=0.2222\overline{2}.$ 
This is suggested by the data in Tables \ref{table:ell=163}, \ref{table:ell=277} and \ref{table:ell=349}.
More generally, is the level-raising condition
defining $C^{(\lambda)}$ independent of any Chebotarev set? Numerical computations suggest that the level-raising condition
defining $C^{(\lambda)}$ is independent of all Dirichlet conditions modulo $N$ for all $N \leq 20.$
\end{question}

\section{Data}
\begin{definition}
For any set of primes $C^{(\cdot)},$ let $C^{(\cdot)}_n = C^{(\cdot)}\cap \{v\leqslant n \}. $
\end{definition}
In this section, all data are for $p=3$. 
\begin{table}[h!]
\begin{center}
\begin{tabular}{lllllr}  
 $n$ &$|C^{(p)}_n|$ & $\frac{|C^{(\lambda)}_n|}{|C^{(p)}_n|}$ & 
 $\frac{|C^{(\tau)}_n|}{|C^{(p)}_n|}$ & 
 $\frac{|C^{(\lambda)}_n|}{|C^{(p)}_n|} \frac{|C^{(\tau)}_n|}{|C^{(p)}_n|} $ &
 $ \frac{ |C^{(\lambda)}_n \cap C^{(\tau)}_n| }{|C^{(p)}_n|}$ \\ [1ex]
 \hline 
  1,000 & 55 & 0.69091 & 0.27273 & 0.18843 & 0.16364 \\ 
  5,000 & 221 & 0.70588 & 0.31222 & 0.22039 & 0.19910 \\ 
  50,000 & 1709 & 0.67934 & 0.33119 & 0.22499 & 0.21650 \\
  100,000 & 3169 & 0.67214 & 0.33197 & 0.22313 & 0.22121 \\
  400,000 & 11260 & 0.67558 & 0.32709 & 0.22097 & 0.22131 \\
  500,000 & 13829 & 0.67366 & 0.32938 & 0.22189 & 0.22012 \\
  1,000,000 & 26116 & 0.66959 & 0.33183 & 0.22219 & 0.22285 \\  
  10,000,000 & 221581 & 0.66765 & 0.33418 & 0.22311 & 0.22313 \\ 
  50,000,000 & 1000659 & 0.66698 & 0.33414 & 0.22286 & 0.22269 \\
  100,000,000 & 1920314 & 0.66622 & 0.33427 & 0.22270 & 0.22268\\ [1ex]
\end{tabular}
\caption{Density of level-raising primes at $\ell=163$}
\label{table:ell=163}
\end{center}
\end{table}

\begin{table}[h!] 
\begin{center}
\begin{tabular}{lllllr}  
 $n$ &$|C^{(p)}_n|$ & $\frac{|C^{(\lambda)}_n|}{|C^{(p)}_n|}$ & 
 $\frac{|C^{(\tau)}_n|}{|C^{(p)}_n|}$ & 
 $\frac{|C^{(\lambda)}_n|}{|C^{(p)}_n|} \frac{|C^{(\tau)}_n|}{|C^{(p)}_n|} $ &
 $ \frac{ |C^{(\lambda)}_n \cap C^{(\tau)}_n| }{|C^{(p)}_n|}$ \\ [1ex]
 \hline 
  1,000 & 53  & 0.67925 & 0.33962 & 0.23069 & 0.26415 \\ 
  5,000 & 220 & 0.65455 & 0.31818 & 0.20826 & 0.21818 \\ 
  50,000 & 1714 & 0.65111 & 0.33722  & 0.21957 & 0.22054 \\
  100,000 & 3195 & 0.65728 & 0.33083 & 0.21745 & 0.21565 \\
  400,000 & 11278 & 0.65934 & 0.33233 & 0.21912 & 0.21812 \\
  500,000 & 13811 & 0.66099 & 0.33068 & 0.21858 & 0.21773  \\
  1,000,000 & 26191 & 0.66347 & 0.33386 & 0.22150 & 0.22183 \\  
  10,000,000 & 221321 & 0.66575 & 0.334649 & 0.22279 & 0.22192 \\ 
  50,000,000 & 1000037 & 0.66658 & 0.33345 & 0.22227 & 0.22188 \\
  100,000,000 & 1920404 & 0.66637 &  0.33346 & 0.22221 & 0.22190 \\ [1ex]
\end{tabular}
\caption{Density of level-raising primes at $\ell=277$}
\label{table:ell=277}
\end{center}
\end{table}

\begin{table}[h!]
\begin{center}
\begin{tabular}{lllllr}  
 $n$ &$|C^{(p)}_n|$& $\frac{|C^{(\lambda)}_n|}{|C^{(p)}_n|}$ & 
 $\frac{|C^{(\tau)}_n|}{|C^{(p)}_n|}$ & 
 $\frac{|C^{(\lambda)}_n|}{|C^{(p)}_n|} \frac{|C^{(\tau)}_n|}{|C^{(p)}_n|} $ &
 $\frac{ |C^{(\lambda)}_n \cap C^{(\tau)}_n| }{|C^{(p)}_n|}$ \\ [1ex]
 \hline 
  1,000 &  50 & 0.66000 & 0.32000 & 0.21120 & 0.20000 \\ 
  5,000 & 227 & 0.64758 & 0.32599 & 0.21110 & 0.20705 \\ 
  50,000 & 1709 & 0.68227 & 0.32768 & 0.22356 & 0.22586\\
  100,000 & 3174 & 0.66635 & 0.32672 & 0.21771 & 0.21834\\
  400,000 & 11186 & 0.66905 & 0.33086 & 0.22136 & 0.22180\\
  500,000 & 13760 & 0.66294 & 0.33256 & 0.22046 & 0.21969\\
  1,000,000 & 26077 & 0.66373 & 0.33263 & 0.22078 & 0.22000\\  
  10,000,000 & 221486 & 0.66547 & 0.33414 & 0.22236 & 0.22210\\ 
  50,000,000 & 1000434 & 0.66606 & 0.33482 & 0.22301 & 0.22293\\
  100,000,000 &  1929529 & 0.66642 & 0.33399 & 0.22258 & 0.22256 \\
  500,000,000 & 8783915 & 0.66672 & 0.33352 & 0.22236 & 0.22238\\
  1,000,000,000 & 16947027 & 0.66659 & 0.33347 & 0.22229 & 0.22227\\  [1ex]
\end{tabular}
\caption{Density of level-raising primes at $\ell=349$}
\label{table:ell=349}
\end{center}
\end{table}

In Table \ref{table:std}, we consider the largest possible sample size $|C^{(p)}_n|$, 
and focus on the variable $|C^{(\lambda)}_n|.$
For different $\ell$, we report the quantity
$$
\frac{(2/3)|C^{(p)}_n|-|C^{(\lambda)}_n|}{\sqrt{|C^{(p)}_n|}}
$$
which measures standard deviations from the expected value of
the variable $|C^{(\lambda)}_n|.$ As one would expect,
the actual values fall within a standard deviation of the expected values. 
\begin{table}[h!]
\begin{center}
\begin{tabular}{lr}  
 $\ell$ & $\frac{(2/3)|C^{(p)}_n|-|C^{(\lambda)}_n|}{\sqrt{|C^{(p)}_n|}}$   \\ [1ex]
 \hline 
  163 & 0.61897  \\ 
  277 & 0.41112 \\ 
  349 & 0.31797 \\  [1ex]
\end{tabular}
\caption{Standard deviations from the mean}
\label{table:std}
\end{center}
\end{table}

\section{Density of level-raising primes for $\operatorname{GL}(2,\Z_p)$}
\subsection{On the rarity of even representations}
\begin{proposition}
Even Galois representations have been explicitly constructed with each of the following images:
\[
\begin{tikzcd}
   & &  \operatorname{SL}(2, \F_2) \\
   & & \operatorname{SL}(2, \F_3)\\ 
\Gal(\overline{\Q}/\Q) 
 \arrow[rr, "\overline{\rho}^{(5)}" description] 
 \arrow[urr, "\overline{\rho}^{(3)}" description] 
 \arrow[uurr, "\overline{\rho}^{(2)}" description] 
 \arrow[drr, "\overline{\rho}^{(7)}" description] 
 \arrow[ddrr, "\overline{\rho}^{(11)}" description] 
 & & \operatorname{SL}(2, \F_5) \\
       & & \operatorname{SL}(2, \F_7) \\
         &   & \operatorname{SL}(2, \F_{11}) \\
\end{tikzcd}
\]
\end{proposition}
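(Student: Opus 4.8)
The plan is to verify the assertion case by case for $p\in\{2,3,5,7,11\}$, in each case producing an even, irreducible $\overline{\rho}\colon G_\Q\to\operatorname{SL}(2,\F_p)$ whose image is all of $\operatorname{SL}(2,\F_p)$. The uniform device: the standard two-dimensional representation of $\operatorname{SL}(2,\F_p)$ is faithful and has trivial determinant, so it suffices to realize $\operatorname{SL}(2,\F_p)$ as the Galois group of a \emph{totally real} normal extension $K/\Q$ and take $\overline{\rho}$ to be the composite $G_\Q\twoheadrightarrow\operatorname{Gal}(K/\Q)\cong\operatorname{SL}(2,\F_p)$. Total reality forces a complex conjugation $c$ into $\operatorname{Gal}(\overline{\Q}/K)$, so $\overline{\rho}(c)=I$, which is exactly the evenness; and irreducibility is automatic once the image contains $\operatorname{SL}(2,\F_p)$.

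For $p=2$ I would use $\operatorname{SL}(2,\F_2)\cong\operatorname{GL}(2,\F_2)\cong S_3$ and take the Galois closure $\widetilde{F}$ of any totally real non-cyclic cubic field $F$ (such fields are classical, e.g.\ a suitable cubic of small real discriminant). Then $\widetilde F/\Q$ is totally real with group $S_3$, and the two-dimensional representation of $S_3$ — which stays irreducible modulo $2$ and realizes $S_3\cong\operatorname{SL}(2,\F_2)$ — gives the desired even, irreducible $\overline{\rho}^{(2)}$. For $p=3$ I would simply invoke Theorem~\ref{goodell} of this paper, or \cite{even1}: for $\ell\in\{163,277,349,\dots\}$ the surjection $\rho_{\mathcal{N}}\colon G_\Q\twoheadrightarrow\operatorname{SL}(2,\Z_3)$ reduces modulo $3$ to an even representation with full image $\operatorname{SL}(2,\F_3)$, which is $\overline{\rho}^{(3)}$.

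For $p=5,7,11$ the group $\operatorname{SL}(2,\F_p)$ is the nonsplit double cover of $\operatorname{PSL}(2,\F_p)$, and I would proceed in two steps. First, realize $\operatorname{PSL}(2,\F_p)$ — that is $A_5$, $\operatorname{PSL}(2,7)$, $\operatorname{PSL}(2,11)$ — as the Galois group of a totally real extension of $\Q$; this is known, either by rigidity / Belyi-type constructions (Malle--Matzat) or by citing explicit totally real polynomials recorded in the literature and in number-field databases. Second, lift the surjection $G_\Q\twoheadrightarrow\operatorname{PSL}(2,\F_p)$ along $\operatorname{SL}(2,\F_p)\to\operatorname{PSL}(2,\F_p)$: the obstruction lives in $H^2(G_\Q,\{\pm1\})$ and is known to vanish for these covers, and the resulting embedding problem (abelian kernel, hence solvable once the local obstructions vanish) can be solved with a solution unramified at the archimedean place, so that the $\operatorname{SL}(2,\F_p)$-field stays totally real. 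Composing with the faithful two-dimensional representation yields $\overline{\rho}^{(p)}$. Alternatively, and more in the spirit of ``explicitly constructed'', one would just exhibit specific totally real $\operatorname{SL}(2,\F_p)$-number fields and check directly that they are totally real with full image.

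The main obstacle is entirely in the $p=5,7,11$ step: one must arrange that the chosen Galois realization is simultaneously totally real \emph{and} a genuine lift to $\operatorname{SL}(2,\F_p)$ rather than to $\operatorname{GL}(2,\F_p)$ or a proper quotient — equivalently, that the determinant character of $\overline{\rho}^{(p)}$ is trivial and complex conjugation maps into $\{\pm I\}$. Since the proposition only asserts that such representations have been constructed, the cleanest completion is to display the explicit number fields for each of $p=5,7,11$ and verify those two conditions; the embedding-problem argument above is the fallback, and its single nontrivial input is the vanishing of the mod-$2$ obstruction class together with local solvability at $p$ and at $\infty$.
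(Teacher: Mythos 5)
Your proposal is correct and follows essentially the same route as the paper: for each $p$, realize $\operatorname{SL}(2,\F_p)$ as the Galois group of a totally real extension of $\Q$, observe that total reality forces complex conjugation to the identity (evenness) and that a full image forces irreducibility; then for $p\geq 5$ reduce to a totally real $\operatorname{PSL}(2,\F_p)$-realization and lift along the central $\mathbb{Z}/2$-cover. The paper's proof discharges the $p=5,7,11$ cases by pointing to the literature where the totally real $\operatorname{PSL}$-fields and their $\operatorname{SL}$-lifts are explicitly recorded (Zeh-Marschke for $p=7$, Mestre for infinitude at $p=7$, Kl\"uners' degree-$11$ polynomial together with its totally real quadratic $\operatorname{SL}(2,\F_{11})$-extension for $p=11$), whereas you supply the abstract embedding-problem argument (vanishing of the obstruction in $H^2(G_\Q,\{\pm 1\})$, solution unramified at $\infty$) as the mechanism for the lift. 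That abstraction is a reasonable fallback but is strictly more than the proposition needs, since the proposition only asserts that explicit constructions \emph{exist}; you correctly flag that exhibiting the concrete number fields is the cleaner completion, which is exactly what the paper's citations do.
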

\begin{proof}
Note that $\operatorname{SL}(2, \F_2) \simeq S_3$, and totally real $S_3$-extensions are straightforward to construct. 
In this paper, we have constructed several examples of even representations with image $\operatorname{SL}(2, \F_3).$ 
Furthermore, $\operatorname{PSL}(2,\F_5) \simeq A_5$, and it is not hard to construct totally real $A_5$-extensions and then lift these to totally real $\operatorname{SL}(2,\F_5)$-extensions. 
Zeh-Marschke \cite{ZM} showed that $\operatorname{PSL}(2,\F_7)$ occurs as the Galois group of a totally real extension over $\Q$ by writing down the explicit polynomial defining the number field, and then lifting to a totally real $\operatorname{SL}(2,\F_7)$-extension. 
In addition, it is a consequence of a theorem of Mestre \cite{mestre} that there exist infinitely many $\operatorname{SL}(2,\F_7)$-extensions of $\Q.$
In \cite{kluners}, it is shown that the Galois group of the splitting field of the polynomial
\begin{IEEEeqnarray*}{rCl}
    f(x) &=& x^{11}-4x^{10}-25x^9+81x^8+237x^7-562x^6-1010x^5+1574x^4 \\
&& +1805x^3-1586x^2-847x+579
\end{IEEEeqnarray*}
is $\operatorname{PSL}(2, \F_{11})$, and that there is a totally real quadratic extension
of the splitting field of $f$ whose Galois group over $\Q$ is $\operatorname{SL}(2,\F_{11})$.
\end{proof}

\begin{remark}
    It is not known if there exists an explicit, \emph{even} irreducible Galois representation
    $$
\Gal(\overline{\Q}/\Q) \longrightarrow \operatorname{SL}(2,\F_{13}),
    $$
    and similarly for all $p \geqslant 17.$ Equivalently, polynomials defining
    totally real
    number fields $K^{(p)}$ with 
    $$
\Gal(K^{(p)}/\Q) \simeq \operatorname{SL}(2,\F_{p})
    $$
    for $p \geqslant 13$ are not known. 
\end{remark}

\subsection{Selmer conditions at nice primes}

\begin{definition}[Nice primes] \label{nice}
Let $p\geq 5$, let $S$ be a finite set of places of $\Q$ containing $p$ and the archimedian place.
Let $G_S = \Gal(\Q_S/\Q),$ where $\Q_S$ is the maximal extension of $\Q$ unramified outside $S$.
Consider an irreducible representation
$$
\overline{\rho}: G_S \to \operatorname{GL}(2, \F_p)
$$
with image containing  $\operatorname{SL}(2, \F_p).$
A prime $v$ is nice for $\overline{\rho}$ if 
    \begin{enumerate}
        \item $v \not \equiv \pm1 \mod p$
        \item $\overline{\rho}$ is unramified at $v$
        \item $\overline{\rho}(\sigma_v)$ has eigenvalues of ratio $v$, where $\sigma_v$ denotes the Frobenius automorphism at $v$. 
    \end{enumerate}
    Define $C^{(p)}$ as set of nice primes for $\overline{\rho}.$ This is a  Chebotarev set, and we will also call $C^{(p)}$ the set of auxiliary primes.
\end{definition}

\begin{lemma} \label{localranks5} Let $p\geqslant 5$ and let $v$ be a nice prime. 
Suppose $\overline{\rho}$ is irreducible with image containing
$\operatorname{SL}(2,\F_{p}).$
The local cohomology groups at $v$ have the properties that
$$
\dim H^0(G_v, \operatorname{Ad}^0(\overline{\rho})) = 1,
\quad 
\dim H^2( G_v, \operatorname{Ad}^0(\overline{\rho})) = 1,
\quad 
\dim H^1(G_v, \operatorname{Ad}^0(\overline{\rho})) = 2.
$$
\end{lemma}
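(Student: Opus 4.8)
The plan is to pin down the $G_v$-module structure of $\operatorname{Ad}^0(\overline{\rho})$ explicitly, read off $\dim H^0$ and $\dim H^2$ by inspection, and then recover $\dim H^1$ from the local Euler characteristic. Since $v$ is nice, $\overline{\rho}$ is unramified at $v$ and the two eigenvalues of $\overline{\rho}(\sigma_v)$ have ratio $v$; hence the operator $\operatorname{Ad}(\sigma_v)$ on the three-dimensional space $\operatorname{Ad}^0(\overline{\rho}) \cong \mathfrak{sl}_2(\F_p)$ has eigenvalues $v$, $1$, $v^{-1}$. (Condition (3) of Definition \ref{nice} also forces $v \ne p$, since a ratio of eigenvalues of a matrix in $\operatorname{GL}(2,\F_p)$ is a unit modulo $p$.) Because $v \not\equiv \pm 1 \bmod p$, these three scalars are pairwise distinct in $\F_p$, so $\operatorname{Ad}(\sigma_v)$ is diagonalizable over $\F_p$; as the $G_v$-action is unramified it factors through Frobenius, and therefore
$$
\operatorname{Ad}^0(\overline{\rho})|_{G_v} \;\cong\; \psi \oplus \mathbf{1} \oplus \psi^{-1} \;\cong\; \F_p(1) \oplus \F_p \oplus \F_p(-1),
$$
where $\psi$ is the unramified character with $\psi(\sigma_v) = v$, which coincides with the restriction to $G_v$ of the mod-$p$ cyclotomic character because $v \ne p$.

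With this decomposition in hand, $H^0(G_v, \operatorname{Ad}^0(\overline{\rho}))$ is the Frobenius-fixed subspace; Frobenius acts by $v$, $1$, $v^{-1}$ on the three summands, and since $v \ne 1$ in $\F_p$ only the middle one is fixed, so $\dim H^0 = 1$. For $H^2$ I would invoke local Tate duality, $H^2(G_v, \operatorname{Ad}^0(\overline{\rho})) \cong H^0(G_v, \operatorname{Ad}^0(\overline{\rho})^*)^{\vee}$ with $\operatorname{Ad}^0(\overline{\rho})^* = \operatorname{Hom}(\operatorname{Ad}^0(\overline{\rho}), \mu_p)$; dualizing the decomposition above and Tate-twisting by $\mu_p$ gives $\operatorname{Ad}^0(\overline{\rho})^*|_{G_v} \cong \F_p \oplus \F_p(1) \oplus \F_p(2)$, on which Frobenius acts by $1$, $v$, $v^2$. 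Here the hypothesis $v \not\equiv \pm 1 \bmod p$ enters a second time, forcing $v^2 \ne 1$, so only the first summand is fixed and $\dim H^2 = 1$. (Equivalently, the trace form exhibits $\operatorname{Ad}^0(\overline{\rho})$ as self-dual up to a Tate twist, reducing $H^2$ to $H^0$ of a twist with the same count.) Finally, since $v \ne p$, the local Euler characteristic formula gives $\dim H^0 - \dim H^1 + \dim H^2 = 0$, hence $\dim H^1(G_v, \operatorname{Ad}^0(\overline{\rho})) = 2$.

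There is no serious obstacle here; the content is pure bookkeeping, and the only thing to track carefully is the role played by the niceness hypotheses. The condition $v \not\equiv \pm 1 \bmod p$ is used three times — to ensure $\operatorname{Ad}(\sigma_v)$ has distinct eigenvalues in $\F_p$ (hence the clean splitting of the local module), to kill the $\psi^{\pm 1}$ summands in the $H^0$ computation, and to kill the $\F_p(2)$ summand of the dual module in the $H^2$ computation — while $v \ne p$ is what identifies $\psi$ with the mod-$p$ cyclotomic character and places us in the tame setting in which the local Euler characteristic takes the stated form. The standing hypothesis $p \ge 5$ is in particular what makes such residue classes of $v$ (units modulo $p$ other than $\pm 1$) available in the first place.
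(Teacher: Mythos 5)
Your proposal is correct, and the computation is complete and careful. The paper itself does not write out a proof of this lemma but simply cites \cite{KLR1} p.\ 714; the argument you give — decompose $\operatorname{Ad}^0(\overline{\rho})|_{G_v}$ into the three unramified lines on which Frobenius acts by $v$, $1$, $v^{-1}$, read off $H^0$, use local Tate duality to reduce $H^2$ to $H^0$ of the Cartier dual (killed by $v \not\equiv \pm 1$), and recover $H^1$ from the vanishing of the local Euler characteristic since $v \ne p$ — is exactly the standard argument the cited reference has in mind, so this is essentially the same approach.
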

\begin{proof} See \cite{KLR1} p. 714. 
\end{proof}
Let $p\geqslant 5$ and let $v$ be a nice prime.   
Up to a twist (cf. \cite{KLR1} p. 714),
$$
\overline{\rho}|_{G_v}: 
\sigma_v \mapsto 
\begin{pmatrix}
v & 0 \\
0 & 1
\end{pmatrix},
\quad 
 \tau_v \mapsto 
\begin{pmatrix}
1 & 0 \\
0 & 1
\end{pmatrix}.
$$
Let $\mathcal{C}_v$ be the set of deformations of 
$\overline{\rho}|_{G_v}$ to $\Z/p^n\Z$ for $n\geq 1$ of the form
$$
\pi_v: 
\sigma_v \mapsto 
\begin{pmatrix}
v & 0 \\
0 & 1
\end{pmatrix},
\quad 
 \tau_v \mapsto 
\begin{pmatrix}
1 & py \\
0 & 1
\end{pmatrix}.
$$
\begin{lemma} \label{vnice}
Let $p \geqslant 5$ and let $v$ be a nice prime. 
Suppose $\overline{\rho}$ is irreducible with image containing
$\operatorname{SL}(2,\F_{p}).$
Let  $\mathcal{N}_v$ be the maximal subspace of $ H^1(G_v, \Ad)$ that preserves $\mathcal{C}_v$.
Then 
$$
\dim \mathcal{N}_v = \dim H^0(G_v, \operatorname{Ad}^0(\overline{\rho}) ),
$$
and $\mathcal{N}_v$ is spanned by the local cohomology class
$$
r_v: 
\sigma_v \mapsto 
\begin{pmatrix}
0 & 0 \\
0 & 0
\end{pmatrix},
\quad 
 \tau_v \mapsto 
\begin{pmatrix}
0 & 1 \\
0 & 0
\end{pmatrix}.
$$
\end{lemma}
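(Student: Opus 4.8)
The plan is to carry out an explicit computation on the tame quotient of $G_v$, exactly parallel to the $p=3$ argument behind Lemma \ref{v}. First I would record the $G_v$-module structure of $\Ad$. Since $\overline{\rho}$ is unramified at $v$ and, after the twist fixed above, $\overline{\rho}(\sigma_v)=\operatorname{diag}(v,1)$, conjugation by $\overline{\rho}(\sigma_v)$ is semisimple with eigenvalues $v,1,v^{-1}$ on the standard $\mathfrak{sl}_2$-triple $E,H,F$ ($E$ strictly upper-triangular, $H$ diagonal, $F$ strictly lower-triangular), while $I_v$ acts trivially because $\overline{\rho}(\tau_v)=I$. Thus $\Ad\simeq\F_p(\chi)\oplus\F_p\oplus\F_p(\chi^{-1})$ as $G_v$-modules, where $\chi$ is the mod $p$ cyclotomic character, so $\chi(\sigma_v)\equiv v\bmod p$.

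Next I would locate the two classes spanning $H^1(G_v,\Ad)$, whose dimension is $2$ by Lemma \ref{localranks5}. A continuous cocycle valued in the $p$-group $\Ad$ is cohomologous to one vanishing on wild inertia, hence factors through the tame relation $\sigma_v\tau_v\sigma_v^{-1}=\tau_v^{v}$. The unramified line $H^1_{\operatorname{unr}}(G_v,\Ad)=\Ad/(\sigma_v-1)\Ad$ is spanned by the class $g^{\operatorname{unr}}$ with $g^{\operatorname{unr}}(\sigma_v)=H$, $g^{\operatorname{unr}}(\tau_v)=0$; the hypothesis $v\not\equiv 1\bmod p$ makes $\sigma_v-1$ invertible on the $E$- and $F$-lines, so only the $H$-line survives in the quotient. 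Checking the tame relation shows $r_v$ (with $r_v(\sigma_v)=0$, $r_v(\tau_v)=E$) is a cocycle: the relation forces $c(\sigma_v\tau_v\sigma_v^{-1})=\sigma_v\cdot c(\tau_v)$ to equal $c(\tau_v^{v})=v\,c(\tau_v)$, i.e.\ $c(\tau_v)$ to lie in the $v$-eigenspace $\F_p E$ of $\sigma_v$, and $E$ does. It is not a coboundary, since every coboundary kills $\tau_v$ (again as $\tau_v$ acts trivially) and $r_v(\tau_v)=E\neq0$. As $r_v\notin H^1_{\operatorname{unr}}$, the classes $g^{\operatorname{unr}},r_v$ form a basis of $H^1$; the remaining hypothesis $v\not\equiv -1\bmod p$ is precisely what prevents a further ramified class supported on the $F$-line and keeps $\dim H^1=2$, in agreement with Lemma \ref{localranks5}.

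Finally I would identify $\mathcal{N}_v$. By definition it is the set of classes preserving the family $\mathcal{C}_v$, and the rigid shape of $\mathcal{C}_v$ — $\sigma_v$ sent to $\operatorname{diag}(v,1)$ on the nose and $\tau_v$ to an upper-triangular unipotent — translates this into: $[f]\in\mathcal{N}_v$ iff $f$ has a representative cocycle with $f(\sigma_v)$ in the span of $E,F$ (so the $\sigma_v$-value conjugates back to $\operatorname{diag}(v,1)$) and $f(\tau_v)\in\F_p E$. The crucial observation is that the conjugations left over after normalizing the $\sigma_v$-value are those by matrices centralizing $\operatorname{diag}(v,1)$, i.e.\ diagonal ones, and these act trivially on $f(\tau_v)$ precisely because $\overline{\rho}(\tau_v)=I$; so the condition on $f(\tau_v)$ is genuine. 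Now $g^{\operatorname{unr}}$ fails this test, because no coboundary can remove the nonzero $H$-component of its $\sigma_v$-value ($H$ is not in the span of the coboundary directions $(v-1)E,(v^{-1}-1)F$), whereas $r_v$ satisfies both conditions verbatim. Hence $\mathcal{N}_v=\F_p\,r_v$, which is one-dimensional, equal to $\dim H^0(G_v,\Ad)=1$; this is the balancedness assertion, and it exhibits the smooth quotient $R_v\to\Z_p[[T]]$ whose cotangent space is $\mathcal{N}_v$. One checks last that translating any $\pi_v\in\mathcal{C}_v$ by a multiple of $r_v$ stays in $\mathcal{C}_v$, so $r_v$ preserves the family to all orders. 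The only step demanding care — though no real difficulty — is the bookkeeping of the conjugation action here; the rest is the local Euler characteristic formula together with the same explicit cocycle computation already carried out for $p=3$.
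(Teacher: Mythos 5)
Your proof is correct. The paper actually gives no proof of this lemma (nor of its $p=3$ analogue, Lemma~\ref{v}); the only supporting citation is to \cite{KLR1} for the local cohomology ranks, so there is no argument in the text to compare against. Your computation supplies the standard details as in the Ramakrishna/Khare--Larsen--Ramakrishna framework: the eigenvalue decomposition $\operatorname{Ad}^0(\overline{\rho}) \simeq \F_p(\chi)\oplus\F_p\oplus\F_p(\chi^{-1})$ under the unramified Frobenius, the tame relation $\sigma_v\tau_v\sigma_v^{-1}=\tau_v^v$ forcing $c(\tau_v)$ into the $v$-eigenspace $\F_p E$ (with the nice-prime hypothesis $v\not\equiv\pm1\bmod p$ keeping this eigenspace one-dimensional and $\dim H^1=2$), the fact that coboundaries vanish on $\tau_v$ since $\overline{\rho}(\tau_v)=I$, and the identification of $\mathcal{N}_v$ with the line on which the $H$-component of the $\sigma_v$-value vanishes, i.e.\ $\F_p r_v$. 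The bookkeeping of which coboundaries remain after normalizing $f(\sigma_v)=0$ (those valued in the $H$-line, which act trivially on $f(\tau_v)$) is exactly the subtlety that must be handled, and you handle it correctly.
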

\begin{lemma} \label{remainbalanced5}
Let $p\geqslant 5.$
    Suppose 
    $$
    \overline{\rho}: G_S \to \operatorname{GL}(2, \F_{p})
    $$
    is irreducible with image containing
$\operatorname{SL}(2,\F_{p})$ and that the global setting is balanced. 
    If we allow ramification at an auxiliary nice prime $v$, then the global setting remains balanced in the sense that
    $$
\dim H^1_{\mathcal{N}} (G_{S\cup\{v\}}, \operatorname{Ad}^0(\overline{\rho}))
=\dim H^1_{\mathcal{N}^\perp} (G_{S\cup\{v\}}, \operatorname{Ad}^0(\overline{\rho})^*).$$
\end{lemma}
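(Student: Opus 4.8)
The plan is to run Wiles' formula (the Proposition above) for the enlarged set of places $S\cup\{v\}$ and compare it, term by term, with the instance of Wiles' formula that underlies the hypothesis that the global setting is already balanced at $S$. Concretely, I would apply it with $M=\operatorname{Ad}^0(\overline{\rho})$ and with local conditions $\mathcal{L}_q=\mathcal{N}_q$ for $q\in S$ and $\mathcal{L}_v=\mathcal{N}_v$, the subspace of $H^1(G_v,\operatorname{Ad}^0(\overline{\rho}))$ furnished by Lemma \ref{vnice} (with annihilator $\mathcal{N}_v^\perp$ at $v$ on the dual side). Since the image of $\overline{\rho}$ contains $\operatorname{SL}(2,\F_p)$ with $p\geq5$, the module $\operatorname{Ad}^0(\overline{\rho})$ has no nonzero Galois invariants, and the same holds for its Cartier dual; these $H^0$'s are computed over $G_{\Q}$ and are insensitive to whether ramification is cut down to $S$ or to $S\cup\{v\}$. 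Hence Wiles' formula at $S\cup\{v\}$ collapses to
\[
\dim H^1_{\mathcal{N}}(G_{S\cup\{v\}},\operatorname{Ad}^0(\overline{\rho}))-\dim H^1_{\mathcal{N}^\perp}(G_{S\cup\{v\}},\operatorname{Ad}^0(\overline{\rho})^*)
=\sum_{q\in S\cup\{v\}}\bigl(\dim\mathcal{N}_q-\dim H^0(G_q,\operatorname{Ad}^0(\overline{\rho}))\bigr).
\]

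Next I would isolate the $v$-term. The hypothesis that the global setting is balanced at $S$, combined with Wiles' formula at $S$ and the vanishing of the two global $H^0$'s just noted, forces $\sum_{q\in S}\bigl(\dim\mathcal{N}_q-\dim H^0(G_q,\operatorname{Ad}^0(\overline{\rho}))\bigr)=0$. So it remains only to check that the extra summand at $v$ vanishes, i.e.\ that $\dim\mathcal{N}_v=\dim H^0(G_v,\operatorname{Ad}^0(\overline{\rho}))$. This is exactly the content of Lemma \ref{vnice}: at a nice prime the pair $(\mathcal{N}_v,\mathcal{C}_v)$ is balanced, with $\mathcal{N}_v$ the line spanned by the class $r_v$, while $\dim H^0(G_v,\operatorname{Ad}^0(\overline{\rho}))=1$ by Lemma \ref{localranks5}. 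Substituting this back in yields $\dim H^1_{\mathcal{N}}(G_{S\cup\{v\}},\operatorname{Ad}^0(\overline{\rho}))=\dim H^1_{\mathcal{N}^\perp}(G_{S\cup\{v\}},\operatorname{Ad}^0(\overline{\rho})^*)$, which is the assertion.

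I do not expect a genuine obstacle: the only inputs are Wiles' formula and the local Euler-characteristic bookkeeping at the nice prime, both already available. The one point that deserves care is making sure the local condition imposed at $v$ in the definition of $H^1_{\mathcal{N}}(G_{S\cup\{v\}},-)$ is precisely the balanced $\mathcal{N}_v$ of Lemma \ref{vnice}, and not, say, the unramified line; once that is pinned down the numerology is forced. Structurally this is the $p\geq5$ analogue of Lemma \ref{remainbalanced3}, with Lemmas \ref{localranks5} and \ref{vnice} playing the roles that Lemma \ref{v} played in the $p=3$ case.
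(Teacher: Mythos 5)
Your proposal is correct and follows exactly the approach the paper takes: the paper's proof is literally the one-line citation ``This follows from Lemma \ref{vnice} and Wiles' formula,'' and you have supplied the bookkeeping that this one-liner compresses. The key points you identify --- vanishing of the global $H^0$'s from irreducibility of $\operatorname{Ad}^0(\overline{\rho})$ under an image containing $\operatorname{SL}(2,\F_p)$, reduction of the $S$-part of the local sum to zero via the balancedness hypothesis, and cancellation of the $v$-term via Lemma \ref{vnice} together with Lemma \ref{localranks5} --- are precisely what the paper's citation is invoking.
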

\begin{proof}
    This follows from Lemma \ref{vnice} and Wiles' formula. 
\end{proof}

\section{Density $(p-1)/p$ and equidistribution of lines in the projective  plane}

\begin{lemma}  \label{kernel}
Let $p\geqslant 3.$
Suppose the global setting is balanced of rank zero at the minimal level, i.e. 
$$ H^1_{\mathcal{N}} (G_{S}, \Ad)
= H^1_{\mathcal{N}^\perp} (G_{S}, \Ad^*) = 0.$$
Let $p \geqslant 3,$
and let $v \in C^{(p)}$ (cf. Definition \ref{aux} if $p=3$ and Definition \ref{nice} if $p\geq 5$.)  
There is one global cohomology class $h^{(v)}$ in the kernel of the localization map
$$
H^1(G_{S\cup\{v\}}, \Ad) 
\longrightarrow
\bigoplus_{q\in S} H^1(G_q, \Ad)/\mathcal{N}_q,
      $$
and $h^{(v)}$ is ramified at $v$.
\end{lemma}
\begin{proof}
The statement follows from Wiles' formula \cite[Prop. 1.6]{Wiles}. If $h^{(v)}$ was unramified at $v$, then 
    $h^{(v)} \in H^1_\mathcal{N}(G_{S}, \Ad) = 0.$
    Hence $h^{(v)}$ is ramified at $v$.
\end{proof}

\begin{theorem} \label{(p-1)/p}
Let $p \geqslant 3$,
and let $v \in C^{(p)}$ (cf. Definition \ref{aux} if $p=3$ and Definition \ref{nice} if $p\geq 5$.) 
Let $h^{(v)}$ be a generator of the kernel of the homomorphism
$$ 
H^1(G_{S\cup\{v\}}, \Ad) 
\longrightarrow
\bigoplus_{q\in S} H^1(G_q, \Ad)/\mathcal{N}_q.
$$
There are $p$ ramified lines in the $\F_p$-vector space $H^1(G_v, \Ad)$
of which  $\mathcal{N}_v$ is one of them with generator
$$
\sigma_v \mapsto 
\begin{pmatrix}
   0 & 0\\
   0 & 0
\end{pmatrix}, \quad
\tau_v \mapsto 
\begin{pmatrix}
   0 & 1\\
   0 & 0
\end{pmatrix}.
$$
Assume the ramified line $h^{(v)}|_{G_v}$ is equidistributed among the $p$ ramified lines in $H^1(G_v, \Ad).$
Then the probability that 
$$
H^1_\mathcal{N}(G_{S\cup\{v\}}, \Ad) = 0
$$
is $(p-1)/p$.
\end{theorem}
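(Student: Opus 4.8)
The plan is to analyze when the Selmer group $H^1_{\mathcal{N}}(G_{S\cup\{v\}}, \Ad)$ is nonzero by tracking the single "extra" class produced by allowing ramification at $v$, and to count the favorable configurations among the $p$ ramified lines in $H^1(G_v,\Ad)$. First I would invoke Lemma \ref{remainbalanced3} (for $p=3$) and Lemma \ref{remainbalanced5} (for $p\geq 5$) to conclude that after adjoining $v$ the global setting remains balanced: $\dim H^1_{\mathcal{N}}(G_{S\cup\{v\}},\Ad) = \dim H^1_{\mathcal{N}^\perp}(G_{S\cup\{v\}},\Addual)$. Next, by Wiles' formula the new unrestricted Selmer group $H^1(G_{S\cup\{v\}},\Ad)$, restricted at the primes of $S$ to lie in the $\mathcal{N}_q$ and left unrestricted at $v$, acquires exactly one extra dimension relative to the rank-zero situation at level $S$: this is precisely the content of Lemma \ref{kernel}, which gives the one-dimensional kernel spanned by $h^{(v)}$, a class that is necessarily ramified at $v$ (else it would lie in the vanishing $H^1_{\mathcal{N}}(G_S,\Ad)$).

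The key structural step is then the dichotomy at $v$. The full group $H^1_{\mathcal{N}}(G_{S\cup\{v\}},\Ad)$ is the subspace of classes in the kernel of Lemma \ref{kernel} whose restriction at $v$ additionally lands in $\mathcal{N}_v$. Since that kernel is the line $\F_p\cdot h^{(v)}$, we get
$$
H^1_{\mathcal{N}}(G_{S\cup\{v\}},\Ad) = 0 \quad\Longleftrightarrow\quad h^{(v)}|_{G_v} \notin \mathcal{N}_v,
$$
using that $h^{(v)}|_{G_v}\neq 0$ (its ramification at $v$ forces it to be a nonzero, indeed ramified, local class, whereas $\mathcal{N}_v$ is spanned by the unramified-type generator $r_v$ displayed in Lemma \ref{v} / \ref{vnice}). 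I would spell out that the local restriction map sends the ramified line $\F_p\cdot h^{(v)}$ to one of the $p$ lines in $H^1(G_v,\Ad)$ that are "ramified" (i.e.\ not contained in the single unramified line $H^1_{\mathrm{unr}}(G_v,\Ad)$) — there are $p+1$ lines total in the $2$-dimensional space $H^1(G_v,\Ad)$, exactly one of which is the unramified line, leaving $p$ ramified lines, one of which is $\mathcal{N}_v$. Finally, under the equidistribution hypothesis that $h^{(v)}|_{G_v}$ is uniformly distributed among these $p$ ramified lines, the favorable event $h^{(v)}|_{G_v}\neq \mathcal{N}_v$ occurs for $p-1$ of the $p$ possibilities, giving probability $(p-1)/p$.

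The main obstacle is the bookkeeping that identifies $H^1_{\mathcal{N}}(G_{S\cup\{v\}},\Ad)$ with exactly the condition $h^{(v)}|_{G_v}\in\mathcal{N}_v$: one must check that imposing the $\mathcal{N}_q$ conditions at all $q\in S$ already cuts the unrestricted $H^1(G_{S\cup\{v\}},\Ad)$ down to the single line $\F_p\cdot h^{(v)}$ (no hidden extra classes coming from the $S$-part), and that $h^{(v)}|_{G_v}$ is genuinely ramified — hence genuinely lands among the $p$ ramified lines rather than accidentally in the unramified line $H^1_{\mathrm{unr}}(G_v,\Ad)$. Both points follow from the rank-zero balancedness at level $S$ together with Lemma \ref{kernel}, but they deserve to be stated carefully. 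The count $p+1$ lines, one unramified, $p$ ramified, with $\mathcal{N}_v$ among the ramified ones, is then immediate from $\dim H^1(G_v,\Ad)=2$ and $\dim H^1_{\mathrm{unr}}(G_v,\Ad)=1$, and the probability computation is a one-line consequence of equidistribution.
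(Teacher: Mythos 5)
Your proposal is correct and follows essentially the same argument as the paper: count the $p+1$ lines in the two-dimensional $\F_p$-vector space $H^1(G_v,\Ad)$, observe that $h^{(v)}|_{G_v}$ is ramified so spans one of the $p$ ramified lines (one of which is $\mathcal{N}_v$), identify $H^1_{\mathcal{N}}(G_{S\cup\{v\}},\Ad)=0$ with $h^{(v)}|_{G_v}\notin\mathcal{N}_v$, and apply the equidistribution hypothesis. One small slip: you call $r_v$ an ``unramified-type generator,'' but $r_v$ is nonzero on $\tau_v$ and hence ramified; your own final count treats $\mathcal{N}_v$ correctly as one of the $p$ ramified lines, so the conclusion is unaffected.
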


Before giving a proof of Theorem \ref{(p-1)/p}, 
the distribution of lines in the plane is interpreted in the language of measure theoretic probability in the following remark. If $(\Omega, P)$ is a probability space,
$E$ a set endowed with a $\sigma$-algebra, 
and $X:\Omega \to E$ a random variable (a measurable map), the probability distribution of $X$ is defined as the image measure of $P$ under $X$ on $E$ and is  
denoted as $X(P)$. 

\begin{remark}[The probability distribution of lines in the projective plane] \label{X}
For each $v \in C^{(p)}$, there is an $\F_p$-linear isomorphism 
$$
H^1(G_v, \Ad) \simeq \F_p^2.
$$
We may choose a bijection
$$
\varphi_v : \mathbb{P} (H^1(G_v, \Ad)) \longrightarrow \mathbb{P} (\F_p^2)
$$
such that 
$$
\varphi_v ( \{ H^1_{\operatorname{unr}}(G_v, \Ad)\} ) = (0:1),
$$
and
$$
\varphi_v ( \{ \mathcal{N}_v \} ) = (1:0),
$$ 
where $\{ H^1_{\operatorname{unr}}(G_v, \Ad)\} \in \mathbb{P} (H^1(G_v, \Ad))$ is the unramified line. 

As a concrete probability space, one could take 
$\Omega = C^{(p)}$ endowed  with a probability measure $P$.  
By defining the random variable  
\begin{IEEEeqnarray*}{rCl}
X : \Omega  & \longrightarrow & \mathbb{P}(\F_p^2)\setminus \{ (0:1) \} \\
v & \mapsto & \varphi_v \circ \operatorname{span}_{\F_p} h^{(v)}|_{G_v},
\end{IEEEeqnarray*}
the assumption in Theorem \ref{(p-1)/p} can be stated as follows:
The image measure $$X(P)= \varphi_v \circ \operatorname{span}_{\F_p} h^{(v)}|_{G_v}(P)$$ is the uniform distribution on 
$\mathbb{P}(\F_p^2)\setminus \{ (0:1) \}$; in particular, 
the probability that the Selmer rank increases by one is the measure of the set 
$$ 
\{ v \in \Omega: \dim H^1_\mathcal{N}(G_{S\cup\{v\}}, \Ad) = \dim H^1_\mathcal{N}(G_{S}, \Ad)  + 1 \},
$$
which is
\begin{IEEEeqnarray*}{rCl}
P ( \dim H^1_\mathcal{N}(G_{S\cup\{v\}}, \Ad) &=& \dim H^1_\mathcal{N}(G_{S}, \Ad)  + 1 ) \\
&=& P( \operatorname{span}_{\F_p} h^{(v)}|_{G_v} = \mathcal{N}_v ) \\
& = & P ( \varphi_v  \circ  \operatorname{span}_{\F_p} h^{(v)}|_{G_v} = \varphi_v(\mathcal{N}_v)) \\
& = & P( X = (1:0) ) \\
& = & X(P)((1:0)) \\   
&=&  1/p.
\end{IEEEeqnarray*}
\end{remark}

\begin{proof}[Proof of Theorem \ref{(p-1)/p}]
By Lemma \ref{localranks5} (if $p \geqslant 5$)
and Lemma \ref{localranks3} (if $p=3$),
$$ \dim_{\F_p} H^1(G_v, \Ad) =2,$$ 
so there are 
$$
\frac{\operatorname{card}(\F_p^2\setminus \{0\}) }{\operatorname{card}(\F_p^\times)} = \frac{p^2-1}{p-1} = p+1
$$ 
lines in $H^1(G_v, \Ad)$. One of the $p+1$ lines is unramified, since
$$
\dim_{\F_p} H^1_{\operatorname{unr}}(G_v, \Ad) =
\dim_{\F_p} H^0(G_v, \Ad) = 1,
$$
and one of the other $p+1$ lines spans $\mathcal{N}_v$.
By Lemma \ref{kernel}, there is one global cohomology class $h^{(v)}$ in the kernel of the restriction map
$$ 
H^1(G_{S\cup\{v\}}, \Ad) 
\longrightarrow
\bigoplus_{q\in S} H^1(G_q, \Ad)/\mathcal{N}_q,
$$
and $h^{(v)}$ is ramified at $v$. Hence $h^{(v)}|_{G_v}$ 
will span one of the $p$ ramified lines in $H^1(G_v, \Ad).$
The probability that 
$h^{(v)}|_{G_v}$
spans the line $\mathcal{N}_v$ is therefore $1/p,$
while the probability that 
$h|_{G_v} \not \in \mathcal{N}_v$
is $(p-1)/p.$
Since  $h|_{G_v} \not \in \mathcal{N}_v$ is equivalent to 
$$
H^1_\mathcal{N}(G_{S\cup\{v\}}, \Ad) = 0,
$$
it follows that level-raising at $v$ occurs with probability 
$$
\frac{p-1}{p}.
$$
\end{proof}
Jointly with Ravi Ramakrishna, we make the following conjecture.
\begin{conjecture}
In Remark \ref{X}, the random variable $X$ was defined, describing the distribution over primes of lines in the projective plane. These lines, parametrized by auxiliary primes, govern level-raising of $p$-adic even representations.
We conjecture that the random variable $X$ is equidistributed.
By Theorem \ref{(p-1)/p}, an affirmative answer to this conjecture implies that the density of primes raising the level of even $p$-adic representations is $(p-1)/p$.
The tables \ref{table:ell=163}, \ref{table:ell=277}, and \ref{table:ell=349} suggest so for $p=3$. However, Chebotarev's theorem does most likely not apply in this setting.
\end{conjecture}

\bibliographystyle{amsplain} 
\bibliography{references.bib}

@article{even1,
  author =       "Ramakrishna, R.",
  title =        "{Deforming an even representation}",
  journal =      "Inventiones mathematicae",
  volume =       "322",
  number =       "3",
  pages =        "563--580",
  year =         "1998",
  label = "even1"
}

@article{even2,
  author =       "Ramakrishna, R.",
  title =        "{Deforming an even representation II, Raising the level}",
  journal =      "Journal of Number Theory",
  volume =       "72",
  number =       "1",
  pages =        "92--109",
  year =         "1998"
}

@misc{ZM,
  author =       "Zeh-Marschke, A.",
  title =        "$\operatorname{SL}(2,\mathbb{Z}/ 7 \mathbb{Z} )$ als {G}aloisgruppe {\"u}ber $\mathbb{Q}$",
  note= "unpublished note"
}

@article{mestre,
  author =       "Mestre, J.-F.",
  title =        "{Construction d'extensions r{\'e}guli{\`e}res de $\Q(T)$ {\`a} groupe de Galois $\operatorname{SL}(2, \mathbb{F}_7)$ et $\widetilde{M}_{12}$}",
  journal =      "Acad. Sci. Paris",
  volume =       "319",
  pages =        "781--782",
  year =         "1994"
}

@article{kluners,
  author =       "Kl{\"u}ners, J.",
  title =        "{A polynomial with {G}alois group $\operatorname{SL}_2(11)$ }",
  journal =      "Journal of Symbolic Computation",
  volume =       "30",
  pages =        "733-737",
  year =         "2000"
}

@article{Ribet,
  author =       "Ribet, K.",
  title =        "On modular representations of 
  $\operatorname{Gal}(\overline{ \mathbb{Q} }/\mathbb{Q})$
  arising from modular forms",
  journal =      "Inventiones Math.",
  volume =       "100",
  pages =        "431--476",
  year =         "1990"
}

@misc{repZ3,
author = {Groupprops},
  title =        "\href{https://groupprops.subwiki.org/wiki/Linear_representation_theory_of_cyclic_group:Z3}{Linear representation theory of cyclic group:Z3}",
  note = "{Accessed: 2023-10-21}"
}

@misc{repA4,
author = {Groupprops},
  title =        "\href{https://groupprops.subwiki.org/wiki/Linear_representation_theory_of_alternating_group:A4}{Linear representation theory of alternating group:A4}",
  note = "{Accessed: 2023-10-16}"
}

@article{Wiles,
  author =       "Wiles, A.",
  title =        "{Modular elliptic curves and {F}ermat's last theorem}",
  journal =      "Annals of Mathematics",
  volume =       "141",
  number =       "3",
  pages =        "443--551",
  year =         "1995"
}

@article{Taylor-Wiles,
  author =       "Taylor, R. and Wiles, A.",
  title =        "{Ring-theoretic properties of certain Hecke algebras}",
  journal =      "Annals of Mathematics",
  volume =       "141",
  number =       "3",
  pages =        "553--572",
  year =         "1995"
}

@article{shanks,
  author =       "Shanks, D.",
  title =        "{The simplest cubic fields}",
  journal =      "Mathematics of Computation",
  volume =       "28",
  number =       "128",
  pages =        "1137-1152",
  year =         "1974"
}

@book{langlands,
  author =       "Langlands, R. P.",
  title =        "Base change for $\operatorname{GL}(2)$",
  year =         1980,
  publisher = "Princeton University Press" 
}

@book{Koch,
  author =      "Koch, H.",
  title =       "{Galois theory of $p$-extentions}",
  edition =     "",
  year =        "2002",
  publisher =   "Springer" 
}

@article{tunnell,
  author =       "Tunnell, J.",
  title =        "{Artin's conjecture for representations of octahedral type}",
  journal =      "Bulletin of the American Mathematical Society",
  volume =       "5",
  number =       "2",
  pages =        "173-175",
  year =         "1981"
}

@incollection{Calegari,
    AUTHOR = {Calegari, F.},
     TITLE = {Reciprocity in the {L}anglands program since {F}ermat's last
              theorem},
 BOOKTITLE = {I{CM}---{I}nternational {C}ongress of {M}athematicians. {V}ol.
              2. {P}lenary lectures},
     PAGES = {610--651},
 PUBLISHER = {EMS Press, Berlin},
      YEAR = {2023}
}

@article{auto,
  author =       {Uttenthal, P. V.},
  title =        {Parameters  of solvable automorphic forms},
  journal = {Journal of Number Theory},
volume = {281},
pages =  {771-794},
    note = {\href{https://doi.org/10.1016/j.jnt.2025.10.016}{doi.org/10.1016/j.jnt.2025.10.016}},
  year = {2026}
}

@article{classification,
    author = {Uttenthal, P. V.},
    title = {A classification of even representations onto 3-adic {SL}(2)},
    journal = {International Journal of Number Theory},
volume = {21},
number = {10},
pages = {2441-2460},
    note = { \href{https://doi.org/10.1142/S1793042125501179}{doi.org/10.1142/S1793042125501179} },
year = {2025}
}

@article{KLR1,
  author =       "Khare, C. and Larsen, M. and Ramakrishna, R.",
  title =        "{Constructing semisimple p-adic Galois representations with prescribed properties}",
  journal =      "American Journal of Mathematics",
  volume =       "127",
  number =       "4",
  pages =        "709--734",
  year =         "2005"
}

\nocite{*}

\end{document}